\newtheorem{thm}{Theorem}[section]
\newtheorem{cor}[thm]{Corollary}
\newtheorem{lm}[thm]{Lemma}
\newtheorem{prop}[thm]{Proposition}
\theoremstyle{definition}
\newtheorem*{xrem}{Remark}
\numberwithin{equation}{section}
\def\NN{{\mathbb{N}}}
\def\ZZ{{\mathbb{Z}}}
\def\CA{{\cal A}}
\def\CC{{\cal C}}
\def\CB{{\cal B}}
\def\CD{{\cal D}}
\def\CE{{\cal E}}
\def\CF{{\cal F}}
\def\CG{{\cal G}}
\def\CK{{\cal K}}
\def\CR{{\cal R}}
\def\CS{{\cal S}}
\def\epv {{ $\mbox{}$\hfill \qedsymbol }}
\def\Im{\textnormal{Im}}
\def\wt{\widetilde}
\def\ra{\rightarrow}
\def\mod{\mbox{{\rm mod}}}
\def\ind{\mbox{{\rm ind}}}
\def\Mod{\mbox{{\rm Mod}}}
\def\Ext{\mbox{{\rm Ext}}}
\def\Coker{\mbox{{\rm Coker}}}  
\def\supp{\mbox{\rm supp}}
  \def\ind{{\rm ind}}
\DeclareMathOperator{\ob}{ob}
\let\mod=\undefined
\DeclareMathOperator{\KG}{KG}
\DeclareMathOperator{\Ker}{Ker}
\DeclareMathOperator{\op}{op}
\DeclareMathOperator{\add}{add}
\DeclareMathOperator{\End}{End}
\DeclareMathOperator{\Hom}{Hom}
\DeclareMathOperator{\mod}{mod}
\DeclareMathOperator{\KGdim}{KG-dim}
\begin{document}

\baselineskip=17pt


\title{On Krull-Gabriel dimension of cluster repetitive categories and cluster-tilted algebras}

\author{Alicja Jaworska-Pastuszak${}^{*}$ and Grzegorz Pastuszak${}^{*}$\\ with an Appendix by Grzegorz Bobi{\'n}ski${}^{*}$}

\date{}

\maketitle

\renewcommand{\thefootnote}{}
\footnote{${}^{*}$Faculty of Mathematics and Computer Science, Nicolaus Copernicus University, Chopina 12/18, 87-100 Toru\'n, Poland, e-mails:
gregbob@mat.umk.pl (G.~Bobi{\'n}ski), jaworska@mat.umk.pl (A.~Jaworska-Pastuszak), past@mat.umk.pl (G.~Pastuszak).}

\renewcommand{\thefootnote}{\arabic{footnote}}
\setcounter{footnote}{0}

\begin{abstract} Assume that $K$ is an algebraically closed field and denote by $\KG(R)$ the Krull-Gabriel dimension of $R$, where $R$ is a locally
bounded $K$-category (or a bound quiver $K$-algebra). Assume that $C$ is a tilted $K$-algebra and $\widehat{C},\check{C},\wt{C}$ are the associated
repetitive category, cluster repetitive category and cluster-tilted algebra, respectively. Our first result states that
$\KG(\wt{C})=\KG(\check{C})\leq\KG(\widehat{C})$. Since the Krull-Gabriel dimensions of tame locally support-finite repetitive categories are
known, we further conclude that $\KG(\wt{C})=\KG(\check{C})=\KG(\widehat{C})\in\{0,2,\infty\}$. Finally, in the Appendix Grzegorz Bobi{\'n}ski
presents a different way of determining the Krull-Gabriel dimension of the cluster-tilted algebras, by applying results of Geigle.
\end{abstract}

\section{Introduction}

Assume that $K$ is an algebraically closed field and $R$ is a locally bounded $K$-category. Recall that $R$ is isomorphic to a bound quiver
$K$-category associated with some locally finite bound quiver \cites{BoGa,Ga}. Hence we can identify finite locally bounded $K$-categories with bound
quiver $K$-algebras, see \cite{AsSiSk}*{I-III}. We denote by $\mod(K)$ the category of all finite dimensional $K$-vector spaces,
 by $\mod(R)$ the category of all finitely generated right $R$-modules and by $\ind(R)$ the category
of all finitely generated indecomposable right $R$-modules. Let $\CF(R)$ be the category of all finitely presented contravariant $K$-linear
functors from $\mod(R)$ to $\mod(K)$, see \cite{Pr2} for details on functor categories. A natural approach to study this abelian category is via the
associated \textit{Krull-Gabriel filtration} \cite{Po}
$$\CF(R)_{-1}\subseteq\CF(R)_{0}\subseteq\CF(R)_{1}\subseteq\hdots\subseteq\CF(R)_{\alpha}\subseteq\CF(R)_{\alpha+1}\subseteq\hdots$$ of $\CF(R)$ by
Serre subcategories, defined recursively as follows:
\begin{enumerate}[\rm(1)]
	\item $\CF(R)_{-1}=0$ and $\CF(R)_{\alpha+1}$ is the Serre subcategory of $\CF(R)$ formed by all functors having finite length in the quotient
category $\CF(R)\slash\CF(R)_{\alpha}$, where $\alpha$ is an ordinal number or $\alpha=-1$,
	\item $\CF(R)_{\beta}=\bigcup_{\alpha<\beta}\CF(R)_{\alpha}$, for any limit ordinal $\beta$.
\end{enumerate}
The \textit{Krull-Gabriel dimension} $\KG(R)$ of $R$ \cite{Ge2} is the smallest ordinal number $\alpha$ such that $\CF(R)_{\alpha}=\CF(R)$, if such
a number exists, and $\KG(R)=\infty$ otherwise. The Krull-Gabriel dimension of $R$ is \textit{finite} if $\KG(R)=n$, for some $n\in\NN$. The
Krull-Gabriel dimension of $R$ is \textit{undefined} if $\KG(R)=\infty$. We note that the Krull-Gabriel dimension is defined for any small abelian
category similarly as above.

There are several motivations to study the Krull-Gabriel dimension of a locally bounded $K$-category or a bound quiver $K$-algebra, see \cite{Sch3}
for details. Our motivation comes from the conjecture of Prest \cite{Pr2} stating that a finite dimensional algebra $A$ is of domestic
representation type if and only if the Krull-Gabriel dimension $\KG(A)$ of $A$ is finite. We refer to \cite{SiSk3}*{XIX} for the definitions of
finite, tame and wild representation type as well as stratification of tame representation type into domestic, polynomial and non-polynomial growth
(introduced in \cite{SkBC}).

The known results support the conjecture of Prest, see the introduction of \cite{P4} for a comprehensive and up-to-date list of these results. Let us
recall that the first and fundamental fact in this direction, due to Auslander \cite{Au}*{Corollary 3.14}, states that an algebra $A$ is of finite
representation type if and only if $\KG(A)=0$. The present paper is in part devoted to determination of the Krull-Gabriel dimension of the
\textit{cluster-tilted algebras}. In particular, we confirm the conjecture of Prest for this class of algebras. Recall that the cluster-tilted
algebras play a prominent role in the theory of \textit{cluster algebras}.

The cluster algebras were introduced by Fomin and Zelevinsky in the seminal paper \cite{FoZe1} in order to create a combinatorial framework for the
study of canonical bases in quantum groups and for the study of total positivity for algebraic groups. Since then the cluster algebras have become a
separate field of study with many connections to other subjects, in particular to the representation theory of finite dimensional algebras. We refer
the reader to \cite{Ke} for a comprehensive survey on the theory of cluster algebras and related topics.

Cluster algebras are linked to the representation theory via tilting theory in \textit{cluster categories} which was introduced by Buan,  Marsh,
Reineke,  Reiten and Todorov in another seminal paper \cite{BMRRT}. If $H$ is a hereditary algebra and $\mathcal{D}^{b}(H)$ is the derived category
of bounded complexes over $\mod(H)$, then the \textit{cluster category} $\CC_{H}$ is an orbit category of $\mathcal{D}^{b}(H)$ under the action of
the functor $\tau^{-1}[1]$. Here $\tau$ denotes the Auslander-Reiten translation in $\mathcal{D}^{b}(H)$ and $[1]$ the shift functor. Recall that
Keller shows in \cite{Ke1} a deep result that the cluster category $\CC_{H}$ is triangulated.

An object $T$ in $\CC_{H}$ is a \textit{cluster-tilting object} provided $T$ has no self-extensions in $\CC_{H}$ and the number of isomorphism
classes  of indecomposable direct summands of $T$ equals the number of simple modules in $\mod(H)$. To each hereditary algebra $H$ one can associate
a cluster algebra in such a way that the cluster variables correspond to the indecomposable direct summands of cluster-tilting objects in $\CC_{H}$ and the
clusters to the cluster-tilting objects themselves, see \cite{BMRRT} for details. In this way cluster categories \textit{categorify} cluster algebras.

\textit{Cluster-tilted algebras} were introduced in \cite{BMR} as (the opposite algebras of) the endomorphisms algebras of cluster-tilting objects.
This is analogous to the classical definition of tilted algebras by Happel and Ringel from \cite{HR}. Cluster-tilted algebras attracted much
attention and are still an active area of research in cluster theory, see \cite{As} for a convenient survey on the topic. Interestingly, it turns out
that these algebras can be defined without any reference to cluster categories. Indeed, it is proved in \cite{ABS2} that any cluster-tilted algebra
is of the form of the trivial extension $\wt{C}:=C\ltimes\Ext^{2}(DC,C)$, where $C$ is a tilted algebra and $D$ denotes the standard $K$-duality.
This fact allows to show in \cite{ABS} that a cluster-tilted algebra $\wt{C}$ is an orbit algebra of a \textit{cluster repetitive category}
$\check{C}$. More specifically, there is a \textit{Galois covering} \cites{BoGa,Ga} $\check{C}\ra\wt{C}$ with a covering group $\ZZ$, see Section 3
for details. Note that this point of view is similar to the description of standard self-injective algebras as orbit algebras of \textit{repetitive
categories}, see \cite{Sk4}.

Recently the second author showed in \cites{P4,P6} a general result that if $R$ is a \textit{locally support-finite} \cite{DoSk} locally bounded
$K$-category and $G$ a torsion-free admissible group of $K$-linear automorphisms of $R$, then $\KG(R)=\KG(R\slash G)$ where $R\slash G$ denotes the
orbit category \cite{BoGa}. In other words, the induced Galois covering $R\ra R\slash G$ preserves Krull-Gabriel dimension. This theorem is further
applied in \cite{P4}*{Theorem 7.3} and \cite{P4}*{Theorem 8.1} to determine Krull-Gabriel dimensions of tame locally-support finite repetitive
categories and standard self-injective algebras, respectively.

In the present paper we use the above facts to determine the Krull-Gabriel dimension of cluster repetitive categories and cluster-tilted algebras,
see Theorem 3.6 for a precise statement of the main result. In particular, we show that if $C$ is a tilted $K$-algebra and
$\widehat{C},\check{C},\wt{C}$ are the associated repetitive category, cluster repetitive category and cluster-tilted algebra, respectively, then we
have $$\KG(\wt{C})=\KG(\check{C})=\KG(\widehat{C})\in\{0,2,\infty\}.$$ We also confirm the conjecture of Prest for the class of cluster-tilted
algebras.

The paper is organized as follows. In the remaining part of Section 1 we fix the notation and terminology used in the paper.

Section 2 is devoted to the introduction of \textit{admissible functors} between module categories and their examples. A crucial property of an
admissible  functor $\varphi\colon \mod(A)\ra\mod(B)$ is that its existence implies that $\KG(B)\leq\KG(A)$, see Proposition 2.1. In Theorem 2.2 we
recall some examples of admissible functors from \cite{P6}, related with certain Galois coverings. These admissible functors are used in proofs of
the main results of \cites{P4,P6} (see Theorem 2.3) which are directly applied in the present paper. Then we prove Theorem 2.4 which yields a new class of
admissible functors playing a prominent role in the proofs our main results, given in Section 3. Later on we introduce \textit{hs-finite} classes of modules and
show in Lemma 2.5 their significance in the context of Theorem 2.4.

The most import technical fact of Section 3 is Proposition 3.3. In this proposition we show that some particular class of modules over the repetitive
category of a tilted algebra, studied in \cite{ABS}, is hs-finite. This property enables us to make use of facts from Section 2 in the proofs of the
main results of the paper which are Theorem~3.4, Theorem~3.6 and Corollary~3.7. The crucial part of Theorem~3.4 states that
$\KG(\wt{C})=\KG(\check{C})\leq\KG(\widehat{C})$ where $C$ is a tilted algebra and $\widehat{C},\check{C},\wt{C}$ are the associated repetitive
category, cluster repetitive category and cluster-tilted algebra, respectively. Theorem~3.6 strengthens Theorem~3.4, showing in particular that
$\KG(\wt{C})=\KG(\check{C})=\KG(\widehat{C})\in\{0,2,\infty\}$. In Corollary 3.7 we conclude that if the base field is countable, then $\wt{C}$
possesses a super-decomposable pure-injective module (see \cites{Pr2,KaPa3}) if and only if $\wt{C}$ is of non-domestic type, confirming (another)
conjecture of Prest for the class of cluster-tilted algebras.

The final section of the paper is the Appendix written by Grzegorz Bobi{\'n}ski. The author gives an alternative proof of Theorem~\ref{3.6}, which is based on a result of Geigle given in \cite{Ge2}*{Corollary 2.9}.

Throughout the paper, we use the following notation and terminology. Fix an algebraically closed field $K$ and assume that $R$ is a locally bounded $K$-category. Recall that in this case $R$ is isomorphic with a \textit{bound quiver $K$-category} of a locally finite quiver, see \cite{BoGa} or \cite[Section 2]{P4} for more straightforward presentation.

Assume that $\CC$ is a full subcategory of $R$. Then $\CC$ is \textit{convex} if and only if for any $n\geq 1$ and objects $x,z_{1},\hdots,z_{n},y$ of $R$ the following condition is satisfied: if $x,y$ are objects of $\CC$ and the $K$-vector spaces of morphisms $R(x,z_{1})$, $R(z_{1},z_{2}),\hdots,R(z_{n-1},z_{n})$, $R(z_{n},y)$ are nonzero, then $z_{1},\hdots,z_{n}$ are objects of $\CC$.

Assume that $R$ is a bound quiver $K$-category of a bound quiver $(Q,I)$ and $\CC$ is a full subcategory of $R$. In this setting, $\CC$ is convex if and only if $\CC$ is a bound quiver $K$-category of a bound quiver $(Q',I')$ such that $Q'$ is some convex subquiver of $Q$. Recall that a full subquiver $Q'$ of $Q$ is \textit{convex} if and only if any path in $Q$ whose source and target belong to $Q'$ is entirely contained in $Q'$. We refer to Section 2 of \cite{P4} for more details.

Assume that $\CC$ is a full subcategory of $R$. A \emph{convex hull} of $\CC$ is a full subcategory of $R$ whose set of objects is an intersection of sets of objects of all convex subcategories of $R$ containing $\CC$. Thus a convex hull of $\CC$ is the smallest convex subcategory of $R$ containing $\CC$. We say that $R$ is \textit{intervally-finite} if and only if a convex hull of any finite full subcategory of $R$ is again finite. This notion is only implicitly contained in \cite[2.1]{BoGa}. The terminology was introduced later.

Assume that $R$ is a locally bounded $K$-category. A \textit{right $R$-module} is a $K$-linear contravariant functor of the form $M \colon R\ra\Mod(K)$ where $\Mod(K)$ denotes the category of all $K$-vector spaces. The category of all such modules is denoted by $\Mod(R)$. A module $M\in\Mod(R)$ is \emph{finite dimensional} if and only if $\dim M=\sum_{x\in\ob(R)}\dim_{K} M(x)<\infty$. We denote by $\mod(R)$ the full subcategory of $\Mod(R)$ formed by finite dimensional modules. Moreover, $\ind(R)$ is the full subcategory of $\mod(R)$ whose objects are indecomposable modules.

If $M\in\mod(R)$, then the set $\supp(M)=\{x\in R\mid M(x)\neq 0\}$ is the \textit{support of the module} $M$. If $\CC$ is a full subcategory of $\mod(R)$, then the union of all supports of modules belonging to $\CC$ is called the \textit{support of $\CC$} and denoted by $\supp(\CC)$.

The category $R$ is \textit{locally support-finite} \cite{DoSk} if and only if for any $x\in R$ the union of the sets $\supp(M)$, where $M\in\ind(R)$ and $M(x)\neq 0$, is finite.

Assume that $R,A$ are locally bounded $K$-categories, $F:R\ra A$ is a $K$-linear functor and $G$ a group of $K$-linear automorphisms of $R$ acting freely on the objects of $R$ (i.e. $gx=x$ if and only if $g=1$, for any $g\in G$ and $x\in\ob(R)$). Then $F:R\ra A$ is a \textit{Galois covering} \cite{BoGa} if and only if
\begin{enumerate}[\rm(1)]
	\item the functor $F:R\ra A$ induces isomorphisms $$\bigoplus_{g\in G}R(gx,y)\cong A(F(x),F(y))\cong\bigoplus_{g\in G}R(x,gy)$$ of vector spaces, for any $x,y\in\ob(R)$,
	\item the functor $F:R\ra A$ is surjective on objects,
	\item $Fg=F$, for any $g\in G$,
	\item for any $x,y\in\ob(R)$ such that $F(x)=F(y)$ there is $g\in G$ such that $gx=y$.
\end{enumerate} It is well known that a functor $F:R\ra A$ satisfies the above conditions if and only if $F$ induces an isomorphism $A\cong R\slash G$ where $R\slash G$ is the \textit{orbit category}, see \cite{BoGa}. We refer to \cite{Ga} for a general definition of a covering functor.

Assume that $F:R\ra A\cong R\slash G$ is a Galois covering. The \textit{pull-up} functor $F_{\bullet}:\Mod(A)\ra\Mod(R)$ associated with $F$ is the functor $(-)\circ F^{\op}$. The pull-up functor has the left adjoint $F_{\lambda}:\Mod(R)\ra\Mod(A)$ and the right adjoint $F_{\rho}:\Mod(R)\ra\Mod(A)$ which are called the \textit{push-down functors}. We refer to Section 2 of \cite{P4} for concrete description of these functors. Here we only mention that push-down functors restrict to the categories of finite dimensional modules and these restrictions coincide, that is, $F_{\lambda}(\mod(R))\subseteq\mod(A)$, $F_{\rho}(\mod(R))\subseteq\mod(A)$ and $F_{\lambda}|_{\mod(R)}=F_{\rho}|_{\mod(R)}$. The restriction $F_{\lambda}|_{\mod(R)}$ is also denoted as $F_{\lambda}$. In the paper we consider only the push-down $F_{\lambda}:\mod(R)\ra\mod(A)$.

Furthermore, $\CG(R)$ denotes the category of all contravariant additive $K$-linear functors $\mod(R)\ra\mod(K)$. If $M\in\mod(R)$, then
${}_{R}(-,M)=\Hom{}_{R}(-,M)$  denotes the \textit{contravariant hom-functor}. Recall that any homomorphism of modules $f\in{}_{R}(M,N)$ induces a
homomorphism of functors ${}_{R}(-,f)\colon {}_{R}(-,M)\ra{}_{R}(-,N)$ such that the map ${}_{R}(X,f)\colon {}_{R}(X,M)\ra{}_{R}(X,N)$ is defined by
${}_{R}(X,f)(g)=fg$, for any $g\in{}_{R}(X,M)$. The Yoneda lemma implies that the function $f\mapsto{}_{R}(-,f)$ defines an isomorphism
${}_{R}(M,N)\ra{}_{\CG(R)}({}_{R}(-,M),{}_{R}(-,N))$ of vector spaces and this yields $M\cong N$ if and only if ${}_{R}(-,M)\cong{}_{R}(-,N)$.

Assume that $F\in\CG(R)$. The functor $F$ is \textit{finitely presented} if and only if there exists an exact sequence of functors
${}_{R}(-,M)\xrightarrow{{}_{R}(-,f)}{}_{R}(-,N)\ra F\ra 0$, for some $M,N\in\mod(R)$ and $R$-module homomorphism $f\colon M\ra N$. This means that
$F\cong\Coker{}_{R}(-,f)$ which yields $F(X)$ is isomorphic to the cokernel of the map ${}_{R}(X,f)\colon {}_{R}(X,M)\ra{}_{R}(X,N)$. We denote by
$\CF(R)$ the full subcategory of $\CG(R)$ formed by finitely presented functors. Obviously ${}_{R}(-,M)\in\CF(R)$ for any $M\in\mod(R)$. Moreover,
the functor ${}_{R}(-,M)$ is a projective object of the category $\CF(R)$ and any projective object of $\CF(R)$ is a hom-functor, see
\cite{AsSiSk}*{IV.6}. If $F\in\CG(R)$, then $\supp(F)=\{X\in\mod(R)\mid F(X)\neq 0\}$ is the \textit{support of $F$}.

We refer the reader to \cite{AsSiSk} for the background of the representation theory of finite dimensional algebras over algebraically closed fields.

\section{Admissible functors and Krull-Gabriel dimension}

In this section we introduce \textit{admissible functors} and relate them with Krull-Gabriel dimension. We give examples of such functors  in
Theorems \ref{2.3} and \ref{2.6}. These results are applied in Section 3.

Assume that $\varphi\colon \mod(A)\ra\mod(B)$ is a $K$-linear additive covariant functor. We define $\Lambda_{\varphi}\colon \CF(B)\ra\CG(A)$ as the composition $(-)\circ\varphi$. Observe that if $U\in\CF(B)$ and ${}_{B}(-,X)\xrightarrow{_{B}(-,f)}{}_{B}(-,Y)\ra U\ra 0$ is exact, then we get the exact sequence $${}_{B}(\varphi(-),X)\xrightarrow{_{B}(\varphi(-),f)}{}_{B}(\varphi(-),Y)\ra U\varphi\ra 0.$$
We say that $\varphi \colon \mod(A)\ra\mod(B)$ is \textit{admissible} if and only if $\varphi$ is dense\footnote{This means that for any module $X\in\mod(B)$ there exists a module $M\in\mod(A)$ such that $\varphi(M)\cong X$. Although the property is often referred as \emph{essential surjectivity}, we stick to above terminology since it is consistent with our previous work, especially with \cite{P4}.} and $\Im(\Lambda_{\varphi})\subseteq\CF(A)$,
that is, $U\varphi$ is finitely presented, for any $U\in\CF(A)$.

The following fact shows that admissible functors are useful in the study of Krull-Gabriel dimension.

\begin{prop} \label{2.2}
Assume that $\varphi\colon \mod(A)\ra\mod(B)$ is an admissible functor. Then we have $\KG(B)\leq\KG(A)$.
\end{prop}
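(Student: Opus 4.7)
The plan is to prove, by transfinite induction on $\alpha$, the single implication $\Lambda_{\varphi}(U)\in\CF(A)_{\alpha}\Rightarrow U\in\CF(B)_{\alpha}$ for every $U\in\CF(B)$. Once this is established, setting $\gamma=\KG(A)$ gives $\CF(A)_{\gamma}=\CF(A)$, so $\Lambda_{\varphi}(U)\in\CF(A)_{\gamma}$ for every $U\in\CF(B)$, hence $U\in\CF(B)_{\gamma}$, and therefore $\KG(B)\leq\gamma=\KG(A)$; the case $\KG(A)=\infty$ is vacuous.

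Before the induction I would record two properties of the functor $\Lambda_{\varphi}\colon\CF(B)\ra\CF(A)$, which takes values in $\CF(A)$ precisely by the admissibility hypothesis. First, $\Lambda_{\varphi}$ is exact, because a sequence in $\CG(B)$ is exact iff it is pointwise exact in $\mod(K)$ and precomposition with $\varphi$ preserves this pointwise exactness. Second, $\Lambda_{\varphi}$ reflects the zero object: if $U\varphi=0$, then $U(\varphi(X))=0$ for every $X\in\mod(A)$, and density of $\varphi$ forces $U(Y)=0$ for every $Y\in\mod(B)$, so $U=0$. This reflection of zero is exactly the base case $\alpha=-1$, and the limit step follows directly from $\CF(R)_{\beta}=\bigcup_{\alpha<\beta}\CF(R)_{\alpha}$.

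The crux is the successor step, which I would argue by contraposition. Assume $U\notin\CF(B)_{\alpha+1}$, so $U$ fails to have finite length in the Serre quotient $\CF(B)/\CF(B)_{\alpha}$. By the standard characterization of finite length in an abelian category via the ascending and descending chain conditions, $U$ admits an infinite chain of subobjects in $\CF(B)$, say $U_{1}\subseteq U_{2}\subseteq\cdots\subseteq U$ or its reverse, with $U_{i+1}/U_{i}\notin\CF(B)_{\alpha}$ for every $i$. Exactness of $\Lambda_{\varphi}$ gives $\Lambda_{\varphi}(U_{i+1})/\Lambda_{\varphi}(U_{i})\cong\Lambda_{\varphi}(U_{i+1}/U_{i})$, and the inductive hypothesis in its contrapositive form yields $\Lambda_{\varphi}(U_{i+1}/U_{i})\notin\CF(A)_{\alpha}$. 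Thus $\Lambda_{\varphi}(U)$ contains an infinite chain strict modulo $\CF(A)_{\alpha}$, contradicting $\Lambda_{\varphi}(U)\in\CF(A)_{\alpha+1}$; hence $U\in\CF(B)_{\alpha+1}$.

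The principal obstacle is the bookkeeping with chain conditions modulo a Serre subcategory in the successor step, namely verifying that a chain of subobjects of $U$ in $\CF(B)$ that is strict modulo $\CF(B)_{\alpha}$ lifts under $\Lambda_{\varphi}$ to a chain in $\Lambda_{\varphi}(U)$ strict modulo $\CF(A)_{\alpha}$; once this identification is in place, the combination of exactness and the inductive hypothesis propagates failure of finite length from $U$ to $\Lambda_{\varphi}(U)$ automatically. Notably, this approach avoids constructing any induced functor $\CF(B)/\CF(B)_{\alpha}\ra\CF(A)/\CF(A)_{\alpha}$ and so does not require the preservation statement $\Lambda_{\varphi}(\CF(B)_{\alpha})\subseteq\CF(A)_{\alpha}$.
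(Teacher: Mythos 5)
Your proposal is correct, and it splits the argument in the same way the paper does at the start: you first record that $\Lambda_{\varphi}$ is exact (pointwise exactness, i.e.\ composition with an additive functor) and that density of $\varphi$ forces $\Lambda_{\varphi}$ to kill no nonzero functor, which for an exact functor is the same as the faithfulness the paper extracts from density. The difference is in the finish: the paper at that point simply cites the comparison lemma for exact faithful functors from Appendix B of Krause's memoir to conclude $\KG(B)\leq\KG(A)$, whereas you re-prove that lemma in-line by transfinite induction on the filtration, showing $\Lambda_{\varphi}(U)\in\CF(A)_{\alpha}\Rightarrow U\in\CF(B)_{\alpha}$, with zero-reflection as the base case and a contrapositive chain argument at successor ordinals. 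Your successor step is sound, but note that the infinite chain witnessing failure of finite length lives a priori among subobjects of the image of $U$ in the Serre quotient $\CF(B)/\CF(B)_{\alpha}$; to get the chain of subobjects $U_{1}\subseteq U_{2}\subseteq\cdots\subseteq U$ inside $\CF(B)$ with $U_{i+1}/U_{i}\notin\CF(B)_{\alpha}$ you need the standard fact that every subobject in a Serre quotient is the image of a subobject upstairs, and that arbitrary lifts can be made nested by replacing them with finite sums (or intersections, in the descending case) -- this is exactly the bookkeeping you flag, and it does go through. What each route buys: the paper's version is shorter and delegates the set-theoretic induction to a reference; yours is self-contained and makes explicit that only exactness plus zero-reflection of $\Lambda_{\varphi}$ (not any preservation statement $\Lambda_{\varphi}(\CF(B)_{\alpha})\subseteq\CF(A)_{\alpha}$) is needed, which is a genuinely useful clarification even though the underlying mechanism is the same as in the cited lemma.
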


\begin{proof} The functor $\Lambda_{\varphi}\colon \CF(B)\ra\CF(A)$ is exact being a composition with a $K$-linear additive functor. We show that $\Lambda_{\varphi}$ is also faithful. Indeed, let $f=(f_{N})_{N}:U\ra V$ be a natural transformation of functors $U,V\in\CF(A)$ and assume that $\Lambda_{\varphi}(f)=0$. If $N\in\mod(A)$, then $N\cong\varphi(X)$, for some $X\in\mod(B)$ and thus $f_{N}\cong f_{\varphi(X)}=\Lambda_{\varphi}(f)_{X}=0$. This yields that $f=0$ and hence the functor $\Lambda_{\varphi}$ is faithful. Then we conclude from \cite[Appendix B]{Kr} that $\KG(B)\leq\KG(A)$.
\end{proof}

The following two theorems (Theorem \ref{2.3} and \ref{2.4}) are proved in \cite{P4} and \cite{P6}. The assertion $(3)$ of Theorem \ref{2.3} gives an
interesting example of an  admissible functor studied in \cite{P6}.

Assume that $R$ is a locally bounded $K$-category, $G$ is an admissible group of $K$-linear automorphisms of $R$ and $F\colon R\ra A\cong R\slash G$
the associated Galois covering. Assume that $B$ is a finite convex subcategory of the category $R$. We denote by $\CE_{B}\colon \mod(B)\ra\mod(R)$
the \textit{functor of extension by zeros}. We call $B$ a \textit{fundamental domain} of the category $R$ if and only if for any $M\in\ind(R)$ there
exists $g\in G$ such that $\supp({}^{g}M)\subseteq B$, see \cite{P6} (by $^{g}M$ we denote the induced action $^{g}M=M\circ g^{-1}$ of $G$ on
$\mod(R)$). We recall the following theorem proved in \cite{P6}.

\begin{thm} \label{2.3}
Assume that $R$ is a locally bounded $K$-category and $G$ an admissible torsion-free group of $K$-linear automorphisms of $R$. The following
assertions hold.
\begin{enumerate}[\rm(1)]
	\item If there exists a fundamental domain $B$ of $R$, then $R$ is locally support-finite.
	\item If $R$ is locally support-finite and intervally-finite, then there exists a fundamental domain $B$ of $R$.
	\item If $B$ is a fundamental domain of $R$, then the push-down functor $F_{\lambda}\colon \mod(R)\ra\mod(A)$ is dense and the functor
$F_{\lambda}\CE_{B}\colon \mod(B)\ra\mod(A)$ is admissible. In particular, $\KG(A)\leq\KG(B)$ and thus $\KG(A)\leq\KG(R)$.
\end{enumerate}
\end{thm}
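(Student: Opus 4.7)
The plan is to handle (1), (2), (3) in order, since each provides technical ingredients for the next.

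For (1), I would fix $x \in \ob(R)$ and bound the indecomposables $M \in \ind(R)$ with $x \in \supp(M)$. By the fundamental domain hypothesis each such $M$ admits some $g \in G$ with $\supp({}^g M) \subseteq B$; in particular $gx \in B$. Since $G$ is torsion-free admissible, its action on $\ob(R)$ is free, so the finiteness of $B$ restricts $g$ to a finite set. For each surviving $g$ the shift ${}^g M$ lies in the finite category $B$, which admits only finitely many isoclasses of indecomposables. Hence $\bigcup_{M \ni x} \supp(M)$ is a finite union of finite sets, proving local support-finiteness. For (2), a standard combinatorial argument in the spirit of \cite{DoSk} suffices: local support-finiteness bounds the $G$-orbits of indecomposables meeting any prescribed finite set, and interval-finiteness ensures that the convex closure of a finite union of finite supports stays finite; iteratively absorbing representative supports of encountered orbits into a growing finite convex set terminates and produces the required $B$.

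For (3), density of $F_\lambda$ follows from (1) by the classical Gabriel / Dowbor--Skowro\'nski theorem that the push-down associated with a Galois covering whose source is locally support-finite is dense on indecomposables. Density of $\varphi := F_\lambda \CE_B$ is then immediate: any $X \in \ind(A)$ equals $F_\lambda(N)$ for some $N \in \ind(R)$, and replacing $N$ by a $G$-shift with support in $B$ (permitted by the fundamental domain property and by $F_\lambda({}^g N) \cong F_\lambda(N)$) realises $X$ as $\varphi(M)$ for some $M \in \mod(B)$.

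The main obstacle is the admissibility of $\varphi$, i.e.\ $U \circ \varphi \in \CF(B)$ for every $U \in \CF(A)$. Starting from a presentation ${}_A(-,X) \to {}_A(-,Y) \to U \to 0$, the functor $U\varphi$ evaluated on $M \in \mod(B)$ is the cokernel of $\Hom_A(\varphi(M),X) \to \Hom_A(\varphi(M),Y)$, and the task is to present it via hom-functors representable in $\mod(B)$. The decisive tool is the Galois-covering hom formula
$$\Hom_A(F_\lambda N, F_\lambda N') \cong \bigoplus_{g \in G} \Hom_R(N, {}^g N'),$$
valid for $N \in \mod(R)$ of finite support. Writing $X = F_\lambda \wt X$ and $Y = F_\lambda \wt Y$ via density and substituting $N = \CE_B(M)$, local support-finiteness makes only finitely many summands nonzero, while convexity of $B$ rewrites each surviving summand as a hom-space of modules in $\mod(B)$ built from finitely many restrictions of the $G$-shifts of $\wt X$ and $\wt Y$. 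Collecting these contributions yields an explicit presentation of $U\varphi$ in $\CF(B)$. Proposition \ref{2.2} then delivers $\KG(A) \leq \KG(B)$, and $\KG(B) \leq \KG(R)$ follows from standard properties of finite convex subcategories of a locally bounded $K$-category.
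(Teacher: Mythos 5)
The paper itself proves only assertion (1) here (by contraposition: if $R$ is not locally support-finite there are indecomposables with arbitrarily large connected supports, so no finite $B$ can absorb a shift of each of them), and refers to \cite{P6} and \cite{P4} for (2) and (3); so only your argument for (1) can be compared directly. Your direct argument for (1) is a valid alternative route, but as written it passes through a false claim: a finite convex subcategory $B$ of $R$ need \emph{not} have only finitely many isoclasses of indecomposables (it can be representation-infinite, e.g.\ contain a Kronecker subquiver), and your ``finite union of finite sets'' conclusion is drawn from that claim. Fortunately the claim is unnecessary: freeness of the $G$-action on objects (which follows from admissibility alone; torsion-freeness is not needed for this) makes $\{g\in G \mid gx\in\ob(B)\}$ finite, and for any indecomposable $M$ with $x\in\supp(M)$ one has $\supp(M)=g^{-1}\bigl(\supp({}^{g}M)\bigr)\subseteq g^{-1}(\ob(B))$ for one of these finitely many $g$, which already bounds the union of supports through $x$. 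With that repair, (1) is fine.

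For (2) and (3) the gaps are more substantial. In (2), the ``iteratively absorb supports until the process terminates'' scheme is not justified: enlarging the convex hull may create new indecomposables supported outside it, and you give no termination argument; the standard construction (as in \cite{DoSk}, \cite{P6}) is one-step: pick representatives $x_{1},\dots,x_{n}$ of the finitely many $G$-orbits of objects, take the convex hull $B$ of $\bigcup_{i}\bigcup\{\supp(M)\mid M\in\ind(R),\ M(x_{i})\neq 0\}$ (finite by local support-finiteness and interval-finiteness), and note that any $M\in\ind(R)$ has a shift whose support contains some $x_{i}$, hence lies in $B$. In (3), the decisive step --- rewriting each surviving summand $\Hom_{R}(\CE_{B}M,{}^{g}\wt{X})$ as a hom-functor on $\mod(B)$ --- is exactly where the work lies, and ``convexity rewrites it via restrictions'' is not enough: the naive guess $\Hom_{B}(M,({}^{g}\wt{X})|_{B})$ is wrong, because a homomorphism out of an extension-by-zero must in addition be annihilated by the maps $({}^{g}\wt{X})(u)$ for all morphisms $u\in R(x,y)$ with $y\in B$ and $x\notin B$. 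One must replace the restriction by the $B$-submodule $L_{g}\subseteq({}^{g}\wt{X})|_{B}$ of elements killed by all such $u$ (convexity of $B$ guarantees that $L_{g}$ is a $B$-submodule and that no further compatibility conditions arise), giving a natural isomorphism $\Hom_{R}(\CE_{B}M,{}^{g}\wt{X})\cong\Hom_{B}(M,L_{g})$; one must also check that the covering decomposition $\Hom_{A}(F_{\lambda}\CE_{B}M,F_{\lambda}\wt{X})\cong\bigoplus_{g}\Hom_{R}(\CE_{B}M,{}^{g}\wt{X})$ is natural in $M$. With these points supplied, your scheme does produce a presentation of $U\varphi$ by representable functors and then Proposition \ref{2.2} applies, in the spirit of the paper's proof of Theorem \ref{2.6}; as written, however, (2) and the key identification in (3) are genuine gaps rather than routine omissions, and the final inequality $\KG(B)\leq\KG(R)$ is likewise only asserted.
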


\begin{proof} All assertions are proved in \cite{P6} and \cite{P4}. For convenience we present a short proof of $(1)$. Assume, to the contrary, that $R$ is
not locally support-finite. Then there are indecomposable finite dimensional $R$-modules with arbitrarily large supports, because $R$ is a bound
quiver $K$-category of a locally finite quiver. In particular, if $B$ is any finite subcategory of $R$, then there is $M\in\ind(R)$ such that
$|\supp(M)|>|B|$. Hence there is no $g\in G$ with $\supp({}^{g}M)\subseteq B$ and so $B$ is not a fundamental domain of $R$.
\end{proof}

The following theorem is the main result of the papers \cite{P4} and \cite{P6}, see in particular \cite{P6}*{Theorem 1.5} (Theorem \ref{2.3} is an
important ingredient of its proof). We apply this theorem in the next section.

\begin{thm} \label{2.4}
Assume $R$ is a locally bounded $K$-category, $G$ an admissible torsion-free group of $K$-linear automorphisms of $R$ and $F\colon R\ra A\cong
R\slash G$ the Galois covering. If $B$ is a fundamental domain of $R$, then $\KG(R)=\KG(B)=\KG(A)$.  \qedsymbol
\end{thm}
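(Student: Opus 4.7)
The plan is to assemble a chain of three inequalities,
$$\KG(A)\le\KG(B)\le\KG(R)\le\KG(A),$$
in which the first two follow by constructing admissible functors and invoking Proposition~\ref{2.2}, while the third is the substantive step beyond Theorem~\ref{2.3} and will exploit the local support-finiteness guaranteed by~\ref{2.3}(1).

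The bound $\KG(A)\le\KG(B)$ comes for free: Theorem~\ref{2.3}(3) already asserts that $F_{\lambda}\CE_{B}\colon\mod(B)\to\mod(A)$ is admissible, so Proposition~\ref{2.2} applies. For $\KG(B)\le\KG(R)$ I would check that the restriction functor $\rho_{B}\colon\mod(R)\to\mod(B)$ along the convex inclusion $B\subseteq R$ is itself admissible. Density is immediate from $\rho_{B}\CE_{B}=\id_{\mod(B)}$. For the finite-presentation condition, convexity of $B$ in $R$ provides a natural isomorphism $\Hom_{R}(X,\CE_{B}M)\cong\Hom_{B}(\rho_{B}X,M)$ for $X\in\mod(R)$ and $M\in\mod(B)$; applying it to a presentation $\Hom_{B}(-,M)\xrightarrow{\Hom_{B}(-,f)}\Hom_{B}(-,N)\to U\to 0$ of $U\in\CF(B)$ identifies $U\circ\rho_{B}$ with $\Coker\Hom_{R}(-,\CE_{B}f)\in\CF(R)$. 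Proposition~\ref{2.2} then gives $\KG(B)\le\KG(R)$.

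The central step is $\KG(R)\le\KG(A)$. Using Theorem~\ref{2.3}(1), I would first exhaust $R$ by an ascending chain $B\subseteq B_{1}\subseteq B_{2}\subseteq\cdots$ of finite convex subcategories with $R=\bigcup_{i}B_{i}$, each $B_{i}$ contained in a finite union of $G$-translates of $B$. Torsion-freeness of $G$ ensures that the push-down $F_{\lambda}$ induces a bijection between $G$-orbits of indecomposables in $\mod(R)$ and indecomposables in $\mod(A)$. The aim is then to produce, for each $i$, an admissible functor $\psi_{i}\colon\mod(A)\to\mod(B_{i})$ by choosing, in every $G$-orbit of indecomposables of $R$, a representative whose support is forced into $B_{i}$; Proposition~\ref{2.2} would yield $\KG(B_{i})\le\KG(A)$. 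Since local support-finiteness ensures that every $F\in\CF(R)$ is presented by morphisms supported in some $B_{i}$, a filtered colimit argument on the Krull-Gabriel filtrations should then upgrade the pointwise bounds to $\KG(R)=\sup_{i}\KG(B_{i})\le\KG(A)$.

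The main obstacle is precisely this last construction. The naive pullback of an $A$-module along $F$ is typically infinite-dimensional, so the sections $\psi_{i}$ must truncate to $B_{i}$ while simultaneously preserving density and the finite presentation property; equivalently, one has to lift the $G$-action to $\CF(R)$ compatibly with the Krull-Gabriel filtration and then perform a descent along $F_{\lambda}$. I expect this covering-theoretic descent is where the main technical work of \cite{P4} and \cite{P6} concentrates, most likely via an analysis of the Auslander-Reiten structure in the locally support-finite setting together with Geigle's characterization of the Krull-Gabriel filtration.
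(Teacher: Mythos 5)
First, note that the paper itself does not prove Theorem~\ref{2.4}: it is quoted from \cite{P4} and \cite{P6} (see \cite{P6}*{Theorem 1.5}), so your attempt can only be measured against the strategy indicated there (fundamental domains and admissible functors) and on its own internal correctness. Your overall skeleton $\KG(A)\le\KG(B)\le\KG(R)\le\KG(A)$ is the right one, and the first inequality is indeed immediate from Theorem~\ref{2.3}(3) and Proposition~\ref{2.2}. However, your justification of $\KG(B)\le\KG(R)$ rests on a false isomorphism: extension by zeros is \emph{not} right adjoint to restriction, even for a convex subcategory. For $R=KA_2$ with arrow $1\ra 2$, $B=\{1\}$, $X=P_2$ and $M$ the simple $B$-module, one has $\Hom_R(X,\CE_B M)=0$ while $\Hom_B(\rho_B X,M)\cong K$. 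The inequality itself is true and the argument is salvageable: replace $\CE_B M$ by the coinduced module (the right adjoint of $\rho_B$, with value $\Hom_B(R(-,x)|_B,M)$ at $x$), which is finite dimensional because $B$ is finite and $R$ locally bounded; then ${}_B(\rho_B(-),M)$ is representable over $\mod(R)$ and admissibility of $\rho_B$ follows. So this step is a fixable error, not a conceptual one.

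The genuine gap is the third inequality $\KG(R)\le\KG(A)$, which is precisely the substance of the theorem. Your plan requires, for each finite convex $B_i$, a \emph{functor} $\psi_i\colon\mod(A)\ra\mod(B_i)$ obtained by ``choosing in every $G$-orbit a representative supported in $B_i$''; but an object-wise choice of orbit representatives does not define a functor, and the push-down $F_\lambda$ admits no section in general (its density comes from the equivalence $\mod(R)/G\simeq\mod(A)$ of Dowbor--Skowro\'nski, which identifies morphism spaces only up to the $G$-action), so neither density nor the finite-presentation condition for $\psi_i$ is established. Moreover, the ``filtered colimit'' identity $\KG(R)=\sup_i\KG(B_i)$ is asserted without proof: the Krull--Gabriel filtration of $\CF(R)$ is built from hom-functors over all of $\mod(R)$, and it is not clear (and needs an argument, even under local support-finiteness) that it is controlled by the filtrations of the categories $\CF(B_i)$. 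You explicitly acknowledge that this descent is ``where the main technical work of \cite{P4} and \cite{P6} concentrates'', which is an accurate assessment, but it means the proposal reproduces only the easy two-thirds of the statement and defers the essential inequality to the very results being proved; as it stands it is an outline with a named hole rather than a proof.
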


In the sequel we assume that $A,B$ are arbitrary locally bounded $K$-categories. We aim to present other examples of admissible functors. For that
purpose, we introduce the following definition.

Assume that $\CR\subseteq\mod(A)$ is a class of $A$-modules and $N\in\mod(A)$. A homomorphism $\alpha_{N}\colon M_{N}\ra N$, where $M_{N}\in\CR$, is
a \textit{right $\CR$-approximation} of $N$ if and only if for any $L\in\CR$ and $a\colon L\ra N$ there is $b\colon L\ra M_{N}$ such that
$\alpha_{N}b=a$, that is, the following diagram $$\xymatrix{&N\\ L\ar[ur]^{a}\ar[r]^{b}&M_{N}\ar[u]_{\alpha_{N}}}$$ is commutative. Further, we say
that $\CR$ is \textit{contravariantly finite} if and only if any module $N\in\mod(A)$ has a right $\CR$-approximation.

Observe that if $\CR\subseteq\mod(A)$ is a contravariantly finite class of $A$-modules and $\CS$ is the smallest full subcategory of $\mod(A)$ closed
under isomorphisms and direct summands such that $\CR\subseteq \ob(\CS)$, then $\CS$ is a contravariantly finite subcategory of $\mod(A)$ in the
classical sense of \cite{AuRe}.

Assume that $\varphi:\mod(A)\ra\mod(B)$ is a $K$-linear additive covariant functor and $\CR\subseteq\mod(A)$ is a class of $A$-modules. We denote by
$\Ker(\varphi)$ the \textit{kernel of $\varphi$}, that is, the class of all homomorphisms $f$ in $\mod(A)$ such that $\varphi(f)=0$. We say that a
homomorphism $f:X\ra Y$ in $\mod(A)$ \textit{factorizes through $\CR$} if and only if there is a module $M\in\CR$
 and homomorphisms $g:X\ra M$ and $h:M\ra Y$ in $\mod(A)$ such that $f=hg$.

The following theorem shows important examples of admissible functors.

\begin{thm} \label{2.6}
Assume that $\varphi:\mod(A)\ra\mod(B)$ is a $K$-linear additive covariant functor which is full and dense. Moreover, assume that there is a
contravariantly finite class of modules $\CR_{\varphi}\subseteq\mod(A)$ such that $\Ker(\varphi)$ equals the class of all homomorphisms in $\mod(A)$
which factorize through $\CR_{\varphi}$. Then $\varphi:\mod(A)\ra\mod(B)$ is admissible and thus $\KG(B)\leq\KG(A)$.
\end{thm}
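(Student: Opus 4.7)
The target is admissibility of $\varphi$; the inequality $\KG(B)\leq\KG(A)$ then falls out of Proposition \ref{2.2}. So, given $U\in\CF(B)$ with a presentation ${}_{B}(-,X)\xrightarrow{{}_{B}(-,f)}{}_{B}(-,Y)\to U\to 0$, the task is to present $U\varphi=\Lambda_{\varphi}(U)$ as a cokernel of a map between representable functors on $\mod(A)$. By density pick $M,N\in\mod(A)$ with $\varphi(M)=X$, $\varphi(N)=Y$, and by fullness lift $f$ to $g\colon M\to N$ with $\varphi(g)=f$. Then $U\varphi$ is the cokernel of ${}_{B}(\varphi(-),\varphi(g))\colon{}_{B}(\varphi(-),\varphi(M))\to{}_{B}(\varphi(-),\varphi(N))$.

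The next step is to descend the representables ${}_{B}(\varphi(-),\varphi(M))$ to representables on $\mod(A)$. For every $L\in\mod(A)$, fullness of $\varphi$ makes the natural map $\varphi_{L,M}\colon{}_{A}(L,M)\to{}_{B}(\varphi(L),\varphi(M))$ surjective, with kernel $\{h\in{}_{A}(L,M)\mid\varphi(h)=0\}$, which by hypothesis is exactly the set of morphisms $L\to M$ factorizing through $\CR_{\varphi}$. These kernels assemble into a subfunctor $I_{M}$ of ${}_{A}(-,M)$, and the maps $\varphi_{L,M}$ into an epimorphism $\pi_{M}\colon{}_{A}(-,M)\twoheadrightarrow{}_{B}(\varphi(-),\varphi(M))$. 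Now contravariant finiteness enters: choose a right $\CR_{\varphi}$-approximation $\alpha_{M}\colon R_{M}\to M$. Any factorization $L\to R\to M$ with $R\in\CR_{\varphi}$ can, by applying the approximation property of $\alpha_{M}$ to the map $R\to M$, be rewritten as $\alpha_{M}\circ b$ for some $b\colon L\to R_{M}$; conversely any morphism of that shape lies in $\Ker(\varphi)$. Hence $I_{M}=\mathrm{im}({}_{A}(-,\alpha_{M}))$, yielding an exact sequence
$${}_{A}(-,R_{M})\xrightarrow{{}_{A}(-,\alpha_{M})}{}_{A}(-,M)\xrightarrow{\pi_{M}}{}_{B}(\varphi(-),\varphi(M))\to 0,$$
and analogously for $N$ via an approximation $\alpha_{N}\colon R_{N}\to N$.

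Now assemble everything. Since $\pi_{M}$ is surjective, the image of ${}_{B}(\varphi(-),\varphi(g))$ equals $\pi_{N}$ applied to the image of $g_{*}={}_{A}(-,g)$. Therefore
$$U\varphi\;\cong\;{}_{A}(-,N)\big/\bigl(\mathrm{im}({}_{A}(-,g))+\mathrm{im}({}_{A}(-,\alpha_{N}))\bigr),$$
and the right-hand denominator is precisely the image of the single hom-functor map ${}_{A}(-,M\oplus R_{N})\to{}_{A}(-,N)$ induced by $(g,\alpha_{N})\colon M\oplus R_{N}\to N$. Thus $U\varphi$ is the cokernel of a morphism of representable functors on $\mod(A)$, so $U\varphi\in\CF(A)$, and $\varphi$ is admissible; the Krull-Gabriel inequality then follows from Proposition \ref{2.2}.

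The only non-formal step is the identification $I_{M}=\mathrm{im}({}_{A}(-,\alpha_{M}))$, where one has to cash in contravariant finiteness to convert the \emph{existence} of some factorization through $\CR_{\varphi}$ into a factorization through the chosen approximation $\alpha_{M}$. Everything else is a diagram chase feeding off the fullness and density of $\varphi$ together with the kernel hypothesis.
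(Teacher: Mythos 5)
Your proposal is correct and follows essentially the same route as the paper: the heart in both cases is the exact sequence ${}_{A}(-,R_{N})\xrightarrow{{}_{A}(-,\alpha_{N})}{}_{A}(-,N)\to{}_{B}(\varphi(-),\varphi(N))\to 0$, established by the identical two-inclusion argument combining the kernel hypothesis with the right $\CR_{\varphi}$-approximation. The only cosmetic difference is at the end, where you write an explicit presentation of $U\varphi$ via the single map $M\oplus R_{N}\to N$, whereas the paper simply invokes that $\CF(A)$ is abelian, so the cokernel of a map between the functors ${}_{B}(\varphi(-),\varphi(M))$ and ${}_{B}(\varphi(-),\varphi(N))$, already shown to be finitely presented, is again finitely presented.
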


\begin{proof} It is enough to show that a functor ${}_{B}(\varphi(-),Z)\colon \mod(A)\ra\mod(K)$ belongs to $\CF(A)$, for any $Z\in\mod(B)$. Indeed, this
follows from the fact that for any $U\in\CF(B)$ the functor $U\varphi\in\CG(A)$ is a cokernel of a morphism between such functors (see the beginning
of this section) and the category $\CF(A)$ is abelian. Since $\varphi\colon \mod(A)\ra\mod(B)$ is dense, it is sufficient to show that
${}_{B}(\varphi(-),\varphi(N))\in\CF(A)$, for any $N\in\mod(A)$. We fix a module $N\in\mod(A)$.

Observe that $\wt{\varphi}=(\wt{\varphi}_{X})_{X\in\mod(A)}$ where $\wt{\varphi}_{X}\colon {}_{A}(X,N)\ra{}_{B}(\varphi(X),\varphi(N))$ is given by
the formula $\wt{\varphi}_{X}(f)=\varphi(f)$, for any $X\in\mod(A)$ and $f\in{}_{A}(X,N)$, is a natural transformation of functors
${}_{A}(-,N)\ra{}_{B}(\varphi(-),\varphi(N))$. Namely, the fact that the functor $\varphi$ preserves the composition (as any covariant functor)
implies that the following diagram
$$\xymatrix{{}_{A}(X,N)\ar[rr]^{\wt{\varphi}_{X}}\ar[d]^{(-)\circ g}&&
{}_{B}(\varphi(X),\varphi(N))\ar[d]^{(-)\circ\varphi(g)}\\{}_{A}(Y,N)\ar[rr]^{\wt{\varphi}_{Y}}&& {}_{B}(\varphi(Y),\varphi(N))}$$ commutes, for any
homomorphism $g\colon Y\ra X\in\mod(A)$. Since $\varphi$ is full, we get that $\wt{\varphi}\colon {}_{A}(-,N)\ra{}_{B}(\varphi(-),\varphi(N))$ is an
epimorphism of functors.

Assume that $\alpha_{N}\colon M_{N}\ra N$ is a right $\CR_{\varphi}$ approximation of the module $N$, for some $M_{N}\in\CR_{\varphi}$. We show that
$\Im({}_{A}(-,\alpha_{N}))=\Ker(\wt{\varphi})$, equivalently, the sequence
$${}_{A}(-,M_{N})\xrightarrow{_{A}(-,\alpha_{N})}{}_{A}(-,N)\stackrel{\wt{\varphi}}{\longrightarrow}{}_{B}(\varphi(-),\varphi(N))\ra 0$$
is exact. We show that $\Im({}_{A}(X,\alpha_{N}))=\Ker(\wt{\varphi}_{X})$, for any $X\in\mod(A)$. Indeed, if $a\in\Im(X,\alpha_{N})$, then
$a=\alpha_{N}b$, for some $b\in{}_{A}(X,M_{N})$ and thus $a$ factorizes through $M_{N}\in\CR_{\varphi}$. This yields $a\in\Ker(\wt{\varphi}_{X})$ and
hence $\Im({}_{A}(X,\alpha_{N}))\subseteq\Ker(\wt{\varphi}_{X})$. To show the converse inclusion, assume that $f\in\Ker(\wt{\varphi}_{X})$. Then $f$
factorizes through some $L\in\CR_{\varphi}$, so $f=hg$, for some homomorphisms $g\colon X\ra L$ and $h\colon L\ra N$. Since $R_{\varphi}$ is
contravariantly finite, we get that $h=\alpha_{N}j$, for some $j\colon L\ra M_{N}$. This implies $f=hg=\alpha_{N}jg\in\Im({}_{A}(X,\alpha_{N}))$
which shows the second inclusion and proves that the above exact sequence gives a projective presentation of the functor
${}_{B}(\varphi(-),\varphi(N))$.

Summing up, we get ${}_{B}(\varphi(-),\varphi(N))\in\CF(A)$, for any module $N\in\mod(A)$, which yields $U\varphi\in\CF(A)$, for any $U\in\CF(B)$ and
so the functor $\varphi\colon \mod(A)\ra\mod(B)$ is admissible. Consequently, we get $\KG(B)\leq\KG(A)$ by Proposition \ref{2.2}.
\end{proof}

Assume that $\CR\subseteq\mod(A)$ is some class of $A$-modules. If $T\in\CF(A)$, then the class $\supp_{\CR}(T)=\{X\in\CR\mid T(X)\neq 0\}$ is called
the \textit{$\CR$-support} of $T$. We shall call the class $\CR$ \textit{hom-support finite} (in short, \textit{hs-finite}) if and only if the
$\CR$-support of a hom-functor ${}_{A}(-,N)$ is finite, for any $N\in\mod(A)$.

The following lemma is a generalized version of a well-known fact for finite subcategories of $\mod(A)$, see for example \cite{AuSm}*{Proposition
4.2}. We apply the lemma in the next section. Let us denote by $\add(\CR)$ the class of all finite direct sums of modules from the class $\CR$.

\begin{lm} \label{2.7}
Assume that $\CR\subseteq\mod(A)$ is a hs-finite class of $A$-modules. Then the class $\add(\CR)$ is contravariantly finite.
\end{lm}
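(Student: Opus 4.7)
The plan is to construct, for every $N\in\mod(A)$, an explicit right $\add(\CR)$-approximation $\alpha_N\colon M_N\to N$, following the classical recipe for finite subcategories recalled in \cite{AuSm}*{Proposition 4.2} and noting that the hs-finite hypothesis is precisely what is needed to imitate that recipe in the present generality.

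First, I would apply hs-finiteness to the hom-functor ${}_{A}(-,N)\in\CF(A)$: this furnishes finitely many pairwise non-isomorphic modules $X_1,\dots,X_k\in\CR$ exhausting the $\CR$-support of ${}_{A}(-,N)$, i.e.\ every $X\in\CR$ with ${}_{A}(X,N)\neq 0$ is isomorphic to some $X_i$. Since $A$ is locally bounded and all modules considered are finite dimensional, each space ${}_{A}(X_i,N)$ has finite $K$-dimension $n_i$; fix a $K$-basis $f_{i,1},\dots,f_{i,n_i}$ of it.

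Next, I would set $M_N:=\bigoplus_{i=1}^{k} X_i^{n_i}\in\add(\CR)$ and define $\alpha_N\colon M_N\to N$ to act as $f_{i,j}$ on the $j$-th copy of $X_i$. To verify the approximation property, take an arbitrary $L\in\add(\CR)$ together with $a\colon L\to N$, decompose $L=\bigoplus_{s}Y_s$ with $Y_s\in\CR$, and examine each component $a_s:=a|_{Y_s}\colon Y_s\to N$. If ${}_{A}(Y_s,N)=0$ then $a_s=0$ and factors trivially through $M_N$; otherwise $Y_s\cong X_{i(s)}$ for some index $i(s)$ by the choice of $X_1,\dots,X_k$, and under this isomorphism $a_s$ becomes a $K$-linear combination of $f_{i(s),1},\dots,f_{i(s),n_{i(s)}}$, which is exactly a factorization through the $X_{i(s)}^{n_{i(s)}}$ summand of $M_N$. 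Assembling these factorizations over $s$ produces a map $b\colon L\to M_N$ with $\alpha_N b=a$, giving the desired approximation.

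I do not anticipate a genuine obstacle: the hs-finite definition is tailored so that the classical argument for finite subcategories applies verbatim, and the only mild subtlety is that a summand $Y_s$ of $L$ lying in $\CR$ need not be isomorphic to one of the $X_i$, but in that case ${}_{A}(Y_s,N)=0$ and factoring $a_s$ is trivial. Once existence of $\alpha_N$ is established for every $N\in\mod(A)$, contravariant finiteness of $\add(\CR)$ follows by the very definition.
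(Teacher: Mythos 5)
Your proof is correct and follows essentially the same route as the paper: the paper builds the approximation as $M_N=\bigoplus_{X\in\CR}\left({}_{A}(X,N)\otimes_{K}X\right)$ with $\alpha_N(f\otimes x)=f(x)$, which (as its own footnote notes) is exactly your basis description $M_N\cong\bigoplus_i X_i^{n_i}$ with $\alpha_N$ given by the chosen basis homomorphisms, and hs-finiteness is invoked in both arguments precisely to ensure $M_N$ is finite dimensional. Your componentwise verification on a decomposition $L=\bigoplus_s Y_s$ is just a slightly more explicit version of the paper's check on a single $X\in\CR$, so there is nothing to add.
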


\begin{proof} Assume that $N\in\mod(A)$. We set $$M_{N}=\bigoplus_{X\in\CR}({}_{A}(X,N)\otimes_{K}X)$$ and define
$\alpha_{N}:M_{N}\ra N$ as $\alpha_{N}(f\otimes x)=f(x)$, for any $f\in {}_{A}(X,N)$ and $x\in X$. Observe that $M_{N}$ is a finite dimensional
$A$-module, because $\CR\subseteq\mod(A)$ is hs-finite and so there is only a finite number of modules $X\in\CR$ such that ${}_{A}(X,N)\neq 0$. We
show that $\alpha_{N}:M_{N}\ra N$ is a right $\add(\CR$)-approximation of $N$. Indeed, for $X\in\CR$ and a homomorphism $a:X\ra N$ define $b:X\ra
M_{N}$ as $b(x)=a\otimes x$, for any $x\in X$. Then we have $(\alpha_{N}b)(x)=\alpha_{N}(b(x))=\alpha_{N}(a\otimes x)=a(x)$, hence $\alpha_{N}b=a$.
This implies that any homomorphism $a:Y\ra N$ such that $Y\in\add(\CR)$ factorizes through $\alpha_{N}$, and so the assertion follows.
\end{proof}

\begin{xrem} It is convenient to note that if $\supp_{\CR}({}_{A}(-,N))=\{X_{1},\dots,X_{m}\}$ and ${}_{A}(X_{i},N)$ is generated, as a $K$-vector space, by the homomorphisms $f_{i}^{1},\dots,f_{i}^{n_{i}}$, for any $i=1,\dots,m$, then $M_{N}\cong\bigoplus_{i=1}^{m}X_{i}^{n_{i}}$ and
$\alpha_{N}\cong\left[f_{1}^{1}\dots f_{1}^{n_{1}}f_{2}^{1}\dots f_{2}^{n_{2}}\dots f_{m}^{1}\dots f_{m}^{n_{m}}\right]$. Although such setting may seem straightforward, the approach presented in the proof above makes argumentation more concise.
\end{xrem}

\section{The main results}

We start with recalling some basic facts on trivial extensions of $K$-algebras and their Galois coverings by some special locally finite dimensional
$K$-algebras (or locally bounded $K$-categories, equivalently). From our point of view, the most important cases of these locally finite dimensional
algebras are \textit{repetitive algebras} \cite{HW} and \textit{cluster repetitive algebras} \cite{ABS}.

Assume that $C$ is an algebra and $E$ is a non-zero $C$-$C$-bimodule. Consider a locally finite
dimensional $K$-algebra $C_{E}$ of the form
$$C_{E}=\left[\begin{array}{ccccc}\ddots&&&&0\\\ddots&C_{-1}&&&\\  &E_{0}&C_{0}&&\\&&E_{1}&C_{1}&\\
 0&& &\ddots&\ddots\end{array}\right]$$ where $C_{i}=C$ and $E_{i}=E$, for any $i\in\ZZ$,
and there are only finitely many non-zero entries. The multiplication is naturally induced from that of $C$ and the $C$-$C$-bimodule structure of
$E$. Further, the identity maps $C_{i}\ra C_{i-1}$ and $E_{i}\ra E_{i-1}$ induce  an automorphism $\nu:=\nu_{C_{E}}$ such that the orbit algebra
$C_{E}\slash\langle\nu\rangle$ is isomorphic to the \textit{trivial extension} $C\ltimes E$ of $C$ by $E$.

Observe that $C_{E}$ may be viewed as a locally bounded $K$-category as follows. Assume that $\{e_{1},\dots,e_{n}\}$ is a complete set of primitive
orthogonal idempotents of $C$. Then the objects of $C_{E}$ are of the form $e_{m,i}$, for $m\in\{1,...,n\}$, $i\in\ZZ$, and the morphism spaces are
defined in the following way
$$C_{E}(e_{m,j},e_{l,i})=\left\{\begin{array}{cl}e_{l}Ce_{m}, & i=j,\\e_{l}Ee_{m}, & i=j+1,\\
0,& \textnormal{otherwise.}\end{array} \right.$$
Moreover, the projection functor $C_{E}\ra C_{E}\slash\langle\nu\rangle\cong C\ltimes E$ is  a
Galois  covering \cite{BoGa} with an  admissible torsion-free covering group $\langle\nu\rangle\cong\ZZ$.

In the case $E=D(C)$, the algebra $C_{E}$ is called the \textit{repetitive algebra} of $C$, denoted by
$\widehat{C}$,  and it is a self-injective algebra \cite{HW}. The automorphism $\nu_{\widehat{C}}$ is the \textit{Nakayama automorphism} of
$\widehat{C}$ and $\widehat{C}\slash\langle\nu_{\widehat{C}}\rangle$ is isomorphic to the trivial extension algebra $T(C)= C\ltimes D(C)$. Let us
mention that the repetitive algebras of tilted algebras play an important role in the classification problems of self-injective algebras, see e.g. \cite{ErKeSk},
\cite{Sk4}.

If $C$ is a tilted algebra and $E=\Ext_{C}^{2}(DC,C)$, then the above matrix algebra $C_{E}$ is called the \textit{cluster repetitive algebra} of $C$ and
denoted  by $\check{C}$ \cite{ABS}. In this case, the trivial extension $C\ltimes E$ of $C$ by $E=\Ext_{C}^{2}(DC,C)$ is called the \textit{relation
extension algebra} and denoted by $\wt{C}$ \cite{ABS2}. It also follows from \cite{ABS2} that $\wt{C}$ is a \textit{cluster-tilted algebra} in the
sense of \cite{BMR} (i.e. the endomorphism algebra of a cluster-tilting object in a cluster category) and every cluster-tilted algebra occurs in that way.

Let $G\colon \check{C}\ra\check{C}\slash\langle\nu_{\check{C}}\rangle\cong\wt{C}$ be the Galois covering of the cluster-tilted algebra $\wt{C}$ by
the cluster repetitive category $\check{C}$. We denote by $G_{\lambda}\colon \mod(\check{C})\ra\mod(\wt{C})$ the associated push-down functor.

\begin{thm} Assume that $C$ is a tilted algebra, $\check{C}$ the associated cluster repetitive
$K$-category and $\widetilde{C}$ the cluster-tilted algebra. There exists a fundamental domain $B$
of $\check{C}$ and hence $\KG(\check{C})=\KG(\widetilde{C})$.
\end{thm}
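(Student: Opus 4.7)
The plan is to apply Theorem \ref{2.3}(2) to the Galois covering $\check{C}\ra\wt{C}$ with torsion-free admissible covering group $\langle\nu_{\check{C}}\rangle\cong\ZZ$, and then to deduce the desired equality from Theorem \ref{2.4}, which gives $\KG(\check{C})=\KG(B)=\KG(\wt{C})$ as soon as a fundamental domain $B$ of $\check{C}$ is in hand. Hence it suffices to verify that $\check{C}$ is intervally-finite and locally support-finite.

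Interval-finiteness of $\check{C}$ is essentially formal from the matrix description: the object set is partitioned into $\ZZ$-indexed strata, each a copy of the Gabriel quiver of $C$, and morphism spaces are concentrated either in a single stratum (contributing $e_{l}Ce_{m}$) or between consecutive strata via a component of $\Ext^{2}_{C}(DC,C)$. Since $C$ is tilted its Gabriel quiver is finite and acyclic, and since the bimodule-arrows point only in the direction of increasing index, only finitely many paths in $\check{C}$ exist between any two objects.

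The hard part is proving that $\check{C}$ is locally support-finite, that is, for each object $e_{m,i}\in\check{C}$ the union of supports of all indecomposable $\check{C}$-modules $M$ satisfying $M(e_{m,i})\neq 0$ is finite. My strategy is to use the description of the indecomposable $\check{C}$-modules from \cite{ABS}, which realises each of them as an extension built from $C$-modules by glueing along the bimodule $\Ext^{2}_{C}(DC,C)$. By $\nu_{\check{C}}$-equivariance it is enough to bound the $\ZZ$-extent of the supports of indecomposables meeting the central stratum, and the ABS model then forces this extent to be uniformly controlled by the homological data of $C$ and of the bimodule $\Ext^{2}_{C}(DC,C)$, independently of the chosen indecomposable. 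This is where the principal technical difficulty of the proof lies and where the structural results of \cite{ABS} are decisive.

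With both properties established, Theorem \ref{2.3}(2) produces a finite convex fundamental domain $B\subseteq\check{C}$, and Theorem \ref{2.4} then yields $\KG(\check{C})=\KG(B)=\KG(\wt{C})$, as required.
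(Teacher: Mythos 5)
Your overall scheme (produce a fundamental domain, then apply Theorem \ref{2.4}) is sound, and the interval-finiteness observation is essentially correct, but the proposal has a genuine gap at exactly the point you yourself flag as ``the hard part'': local support-finiteness of $\check{C}$ is never actually proved. The appeal to an ``ABS model'' realising every indecomposable $\check{C}$-module as an extension of $C$-modules glued along $\Ext^{2}_{C}(DC,C)$, with support extent ``uniformly controlled by the homological data of $C$ \dots independently of the chosen indecomposable'', is an assertion, not an argument: no such uniform-bound statement appears in \cite{ABS}, and your sketch gives no reason why the bound should be independent of the indecomposable --- but that independence \emph{is} local support-finiteness, so the sketch is circular. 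As written, the proof of the theorem has been reduced to an unproved claim that carries all of its difficulty.

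What \cite{ABS} actually supplies, and what makes the detour through Theorem \ref{2.3}(2) unnecessary, is different: in the proof of \cite{ABS}*{Theorem 4.3} there is a full subcategory $\check{\Omega}\subseteq\ind(\check{C})$ (the successors of $\Sigma_{0}$ which are proper predecessors of $\Sigma_{1}$, for a complete slice $\Sigma$ of $\mod(C)$) on which the push-down $G_{\lambda}$ restricts to a map into $\ind(\wt{C})$ that is bijective on objects, and moreover $\supp(\check{\Omega})$ equals the object set of the cluster duplicated algebra $\overline{C}$, a finite convex subcategory of $\check{C}$. Given an arbitrary $X\in\ind(\check{C})$, the module $G_{\lambda}(X)$ is indecomposable, hence isomorphic to $G_{\lambda}(Y)$ for some $Y\in\check{\Omega}$; since the covering group $\langle\nu_{\check{C}}\rangle\cong\ZZ$ is torsion-free, this forces ${}^{g}X\cong Y$ for some $g$ (see \cite{DoSk}*{2.5}), so $\supp({}^{g}X)\subseteq\ob(\overline{C})$ and $\overline{C}$ is a fundamental domain outright; local support-finiteness then follows a posteriori from Theorem \ref{2.3}(1) instead of being an input. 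To repair your argument you should either carry out this direct construction (after which your two preliminary verifications are superfluous) or supply a genuine proof of local support-finiteness of $\check{C}$, which the current text does not contain.
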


\begin{proof} Let $C$ be a tilted algebra. Consider the matrix algebra $\overline{C}= \left[\begin{array}{cc}C_0&0\\E&C_1\end{array}\right]$, where
$C_0=C_1=C$ and $E=\Ext_{C}^{2}(DC,C)$, endowed with the ordinary matrix addition and the multiplication induced from that of $C$ and from the
$C$-$C$-bimodule structure of $E=\Ext_{C}^{2}(DC,C)$ (called the \textit{cluster duplicated algebra} of $C$ in \cite{ABS}). Lemma 5 of \cite{ABS}
describes the quiver $Q_{\check{C}}$ of the cluster repetitive category $\check{C}$ and we easily conclude that $\overline{C}$ is a finite convex
subcategory of $\check{C}$. Moreover, in the proof of \cite[Theorem 22]{ABS} there is defined a full subcategory $\check{\Omega}$ of
$\ind(\check{C})$ such that the restriction
$$G_{\lambda}|_{\check{\Omega}}\colon  \check{\Omega} \rightarrow \ind (\widetilde{C})$$ of the push-down functor
$G_{\lambda}\colon \mod(\check{C})\ra\mod(\wt{C})$ is bijective on objects, faithful, preserves irreducible morphisms and almost split
sequences\footnote{We note that $\check{\Omega}$, also denoted by $\Omega$ in \cite{ABS}, is called by the authors a fundamental domain as well. This
definition differs only slightly from ours in Section 2.}. Let us add that the objects of $\check{\Omega}$ are the successors of $\Sigma_0$ and also
proper predecessors of $\Sigma_1$ in $\ind (\check{C})$, where $\Sigma_i$ denotes the image in $\mod(C_i)$ of a complete slice $\Sigma$ from
$\mod(C)$ under the isomorphisms $C_i\cong C$, $i \in \ZZ$. Further, it is shown that $\ob(\overline{C})=\supp (\check{\Omega})$.

Let now $X\in \ind(\check{C})$. Then $G_{\lambda}(X)\in \ind(\widetilde{C})$. Since $G_{\lambda}|_{\check{\Omega}}$ is bijective on objects, there
exists a module $Y \in \check{\Omega}$ such that $G_{\lambda}(X)\cong G_{\lambda}(Y)$ and hence $^gX \cong Y$, for some $g\in G$ (see e.g.
\cite[2.5]{DoSk}). We conclude that for any $X \in \ind(\check{C})$ there is $g \in G$ such that $^gX\in\check{\Omega}$. This implies that
$\supp(^gX) \subseteq \supp(\check{\Omega})=\overline{C}$ which means that $\overline{C}$ is a fundamental domain of $\check{C}$. Applying Theorem
\ref{2.4} to the Galois covering $G\colon \check{C}\ra\check{C}\slash\langle\nu_{\check{C}}\rangle\cong\wt{C}$ we get that
$\KG(\check{C})=\KG(\widetilde{C})$.
\end{proof}

The notation for the subcategory $\check{\Omega}$ ($=\Omega$) in the above theorem comes from
\cite{ABS}. This notation is slightly confusing because of its similarity to the usual symbol for the
syzygy functor, nevertheless we use it to be consistent with \cite{ABS}. From now on $\Omega$ is
reserved for the syzygy functor.

Our aim now is to show that $\KG(\check{C})\leq\KG(\widehat{C})$, for any tilted algebra $C$.
We denote by $\CK_{C}$ the set
$$\{\widehat{P}_{x},\tau^{1-i}\Omega^{-i}(C)\mid x\in(\widehat{C})_{0},i\in\ZZ\}$$ of modules from
$\mod(\widehat{C})$ where $\widehat{P}_{x}$ is an indecomposable projective $\widehat{C}$-module
at the vertex $x\in(\widehat{C})_{0}$ and $\tau=\tau_{\widehat{C}}$
is the Auslander-Reiten translation in $\mod(\widehat{C})$. We have the following fact,
see \cite[Lemma 8, Theorem 9]{ABS}.

\begin{prop}
Assume that $C$ is a tilted algebra. There exists an additive $K$-linear functor $\phi:\mod(\widehat{C})\ra\mod(\check{C})$ which is full and dense
such that $\Ker(\phi)$ equals the class of all homomorphisms in $\mod(\widehat{C})$ which factorize through $\add(\CK_{C})$.
\end{prop}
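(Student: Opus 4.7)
The plan is to construct $\phi$ directly from the relationship between $\widehat{C}$ and $\check{C}$ set up in \cite{ABS}, and then to verify fullness, density, and the kernel description by exploiting the explicit Auslander--Reiten structure of $\mod(\widehat{C})$ for a tilted algebra $C$.

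First I would define $\phi$ on indecomposable objects. Using the fundamental-domain description from the proof of Theorem 3.1, every indecomposable $\check{C}$-module corresponds to a unique isomorphism class of indecomposable $\widehat{C}$-module lying in a specific ``stripe'' — essentially the successors of a shifted slice $\Sigma_{0}$ that are proper predecessors of $\Sigma_{1}$, lifted to $\mod(\widehat{C})$. For $M\in\ind(\widehat{C})$ I would set $\phi(M)=0$ precisely when $M$ is isomorphic to an element of $\CK_{C}$, that is, $M$ is an indecomposable projective-injective $\widehat{P}_{x}$ or a shifted copy $\tau^{1-i}\Omega^{-i}(C)$ of the slice module, and otherwise I would assign to $M$ the unique indecomposable $\check{C}$-module associated to $M$ in \cite{ABS}. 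Then I would extend additively to arbitrary modules and define $\phi$ on morphisms by the same correspondence, checking compatibility on Hom-spaces using the push-down functor $G_{\lambda}$ from the proof of Theorem 3.1.

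Density follows from the surjectivity on isomorphism classes of indecomposables that is built into the construction. For fullness, a reduction to indecomposable summands shows it suffices, given $M,N\notin\CK_{C}$, to lift any homomorphism $\phi(M)\ra\phi(N)$ in $\mod(\check{C})$ to a homomorphism $M\ra N$ in $\mod(\widehat{C})$; this uses the comparison of Hom-spaces inside the fundamental stripe of $\widehat{C}$ with those of $\check{C}$, which is dictated by the definition of $\check{C}$ through the bimodule $\Ext_{C}^{2}(DC,C)$. The kernel description is the core technical step: one inclusion — morphisms factoring through $\add(\CK_{C})$ lie in $\Ker(\phi)$ — is immediate since $\phi$ annihilates every summand of an object in $\add(\CK_{C})$; for the converse I would decompose an arbitrary $f\colon M\ra N$ lying in $\Ker(\phi)$ according to the AR-components of $\widehat{C}$ and argue that each non-zero component must route either through some projective-injective $\widehat{P}_{x}$ (when the component is a ``connecting'' one between shifted slices) or through some $\tau^{1-i}\Omega^{-i}(C)$ (when it crosses between consecutive shifted slices of $\Sigma$).

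The main obstacle will be this last step: showing that every $f\in\Ker(\phi)$ genuinely factorizes through $\add(\CK_{C})$, rather than merely vanishing on supports or factoring through some larger class. This requires a fairly detailed analysis of the AR-quiver of $\widehat{C}$ and relies crucially on $C$ being tilted, so that $\widehat{C}$ admits a transparent combinatorial description via shifted slices, and further on the observation that the morphisms killed when passing from the bimodule $D(C)$ to $\Ext_{C}^{2}(DC,C)$ are exactly those that can be rerouted through the modules in $\CK_{C}$.
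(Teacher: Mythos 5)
There is a genuine gap, and it is worth noting first that the paper itself does not prove this statement: it is quoted directly from \cite{ABS}*{Lemma 3.3, Theorem 3.4}, so what you are attempting is a reconstruction of that work rather than of an argument in this paper. Your sketch does not yet constitute such a reconstruction. First, a functor is not obtained by prescribing images of indecomposable objects and ``extending additively'': you never define $\phi$ on morphisms, and ``by the same correspondence'' has no content until the maps ${}_{\widehat{C}}(M,N)\ra{}_{\check{C}}(\phi(M),\phi(N))$ are actually specified and functoriality is checked. Moreover, the object-level correspondence you invoke is borrowed from the results being proved: the fundamental domain $\check{\Omega}$ appearing in the proof of Theorem 3.1 is a subcategory of $\ind(\check{C})$ placed in bijection with $\ind(\wt{C})$ via the push-down $G_{\lambda}$; it does not by itself produce a matching between the indecomposables of $\mod(\widehat{C})$ outside $\CK_{C}$ and $\ind(\check{C})$, and setting $\phi(M)=0$ exactly for $M\in\CK_{C}$ presupposes the kernel description you are supposed to establish. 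Second, and decisively, the inclusion $\Ker(\phi)\subseteq[\add(\CK_{C})]$ --- that every homomorphism killed by $\phi$ genuinely factorizes through $\add(\CK_{C})$ --- is not argued at all; you yourself flag it as ``the main obstacle,'' so the proposal stops precisely where the proof would have to begin. An AR-quiver rerouting argument of the kind you envisage would in any case be delicate outside Dynkin type, since for wild type there is no comparably transparent combinatorial control of all morphisms in $\Gamma_{\widehat{C}}$.

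For orientation: in \cite{ABS} the functor is obtained structurally rather than by matching indecomposables, essentially by passing from $\mod(\widehat{C})$ to the stable category $\underline{\mod}(\widehat{C})$, using Happel's equivalence with the derived category of $\mod(C)$, and then applying a covering-level analogue of the Buan--Marsh--Reiten construction; the two families constituting $\CK_{C}$ (the projective-injectives $\widehat{P}_{x}$ and the modules $\tau^{1-i}\Omega^{-i}(C)$) account exactly for the morphisms annihilated at the successive steps, which is how fullness, density and the kernel description are obtained simultaneously. If you want a self-contained proof, that is the route to follow; otherwise the honest course, and the one the paper takes, is to cite \cite{ABS} for this proposition.
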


The following property of $\CK_{C}$ is crucial in applying the results from Section 2.

\begin{prop} Assume that $C$ is a tilted algebra. The class $\CK_{C}$ is a hs-finite class of
$\mod(\widehat{C})$.
\end{prop}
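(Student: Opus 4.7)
The plan is to fix an arbitrary $N\in\mod(\widehat{C})$ and bound separately, in each of the two families comprising $\CK_C$, the subset of modules $X$ with $\Hom_{\widehat{C}}(X,N)\neq 0$. For the projective part $\{\widehat{P}_x\}_{x\in(\widehat{C})_0}$, the Yoneda isomorphism yields $\Hom_{\widehat{C}}(\widehat{P}_x,N)\cong N(x)$, which is nonzero iff $x\in\supp(N)$. Finite dimensionality of $N$ makes $\supp(N)$ finite, and this part is immediate.

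For the translates $\{\tau^{1-i}\Omega^{-i}(C)\}_{i\in\ZZ}$, the argument rests on the elementary observation that if $M,N\in\mod(\widehat{C})$ satisfy $\supp(M)\cap\supp(N)=\emptyset$, then $\Hom_{\widehat{C}}(M,N)=0$: for each object $x$, either $M(x)=0$ or $N(x)=0$, forcing the component $f_x\colon M(x)\ra N(x)$ of any morphism $f$ to vanish. It therefore suffices to show that the set
\[
\{\,i\in\ZZ\mid \supp(\tau^{1-i}\Omega^{-i}(C))\cap\supp(N)\neq\emptyset\,\}
\]
is finite. Since $\supp(N)$ is concentrated in finitely many levels of the $\ZZ$-grading of $\widehat{C}$, this reduces to a uniform support-control statement of the following shape.

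\textbf{Key claim.} There exist integers $a\leq b$ and $c\neq 0$, independent of $i$, such that
\[
\supp(\tau^{1-i}\Omega^{-i}(C))\;\subseteq\;\{\,e_{m,j}\mid a+ci\leq j\leq b+ci\,\},
\]
so the supports form a sliding band of bounded width through the level grading. To prove this, I would use that the Nakayama automorphism $\nu=\nu_{\widehat{C}}$ acts on objects by $e_{m,i}\mapsto e_{m,i-1}$ and, being a $K$-linear automorphism of $\widehat{C}$, commutes up to natural isomorphism with $\tau$ and $\Omega^{\pm 1}$. Combined with the standard identity for self-injective algebras relating $\tau$, $\Omega^{2}$ and $\nu$ on the stable category, this yields an equivariance formula expressing $\tau^{1-i}\Omega^{-i}(C)$ as an $i$-fold Nakayama translate (up to a bounded correction) of a single base module $\tau\, C$. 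Since $C$, viewed as a $\widehat{C}$-module via the embedding $\mod(C_0)\hookrightarrow\mod(\widehat{C})$, has finite support in level $0$, each Nakayama translate has finite support contained in the band predicted above.

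Once the key claim is established, the proof concludes at once: only finitely many bands $[a+ci,b+ci]$ can meet the finite set of levels containing $\supp(N)$, hence $\Hom_{\widehat{C}}(\tau^{1-i}\Omega^{-i}(C),N)=0$ for all but finitely many $i$. The main obstacle is precisely the key claim: carefully tracking how $\tau$ and $\Omega^{-1}$ interact with the Nakayama shift in the self-injective category $\widehat{C}$, so that the accumulated effect of iterating $\tau^{1-i}\Omega^{-i}$ translates into a linear movement (of constant slope) of the support, with width controlled by the bounded support of $C$ itself. Modulo this structural computation, the remainder of the argument is a bookkeeping combination of the support-disjointness criterion for vanishing of $\Hom$ with the finiteness of $\supp(N)$.
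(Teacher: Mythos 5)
Your treatment of the projective part is fine and coincides with the paper's (Yoneda gives ${}_{\widehat{C}}(\widehat{P}_x,N)\cong N(x)$, so only the finitely many $x\in\supp(N)$ contribute), and the reduction ``disjoint vertex supports $\Rightarrow$ $\Hom=0$'' is correct. The genuine gap is the Key Claim and, more specifically, the argument you propose for it. From $\tau\cong\Omega^{2}\nu$ and the fact that the automorphism-induced twist $\nu$ commutes with $\tau$ and $\Omega^{\pm1}$, what you actually get in the stable category is
\[
\tau^{1-i}\Omega^{-i}\;\cong\;\nu^{1-i}\,\Omega^{2-3i},
\]
so $\tau^{1-i}\Omega^{-i}(C)\cong\nu^{1-i}\bigl(\Omega^{2-3i}(C)\bigr)$. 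This is \emph{not} ``an $i$-fold Nakayama translate of $\tau C$ up to a bounded correction'': the leftover power of the (co)syzygy functor is unbounded in $i$, and no formal identity absorbs it into Nakayama shifts (that would amount to $\Omega^{3}$ being a twist, i.e.\ a periodicity statement which fails here). Each application of $\Omega^{\pm1}$ can a priori spread the support by a level rather than slide it, so nothing in the formal calculus rules out the band width growing linearly with $i$; controlling the combined effect of $\nu^{1-i}$ and $\Omega^{2-3i}$ is precisely the nontrivial content. Indeed, in the paper's Case 3 the relevant control comes from the theorems of Assem--Nehring--Skowro\'nski and Erdmann--Kerner--Skowro\'nski on $\Gamma_{\widehat{C}}$: $\nu$ shifts the component families $\mathcal{X}_q\vee\mathcal{C}_q$ by $2$ and $\Omega$ shifts their stable parts by exactly $1$, so the two unbounded contributions cancel to give $\tau^{1-i}\Omega^{-i}(P)\in\mathcal{X}^s_i\vee\mathcal{C}^s_i\vee\mathcal{X}^s_{i+1}\vee\mathcal{C}^s_{i+1}$; this is a theorem about repetitive categories of tilted algebras, not a consequence of $\tau\cong\Omega^2\nu$.

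There is a second, smaller issue: even granting a sliding-band statement, your finiteness criterion is vertex-support disjointness, whereas the paper derives finiteness from Hom-vanishing between distant component families (conditions (a) and (c) in its Case 3) together with local representation-finiteness of $\widehat{C}$ in the Dynkin case. In the Euclidean and wild cases it is not at all obvious from first principles that modules in a fixed family $\mathcal{X}_q\vee\mathcal{C}_q$ have vertex supports confined to a level band of uniformly bounded width; that again needs the cited support descriptions from Hughes--Waschb\"usch, Assem--Nehring--Skowro\'nski and Erdmann--Kerner--Skowro\'nski. So your outline would become a proof only after importing essentially the same structure theory that the paper uses, and the step you flag as ``the main obstacle'' is exactly where the proposal currently has no argument.
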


\begin{proof} We view a tilted algebra $C$ as a $\widehat{C}$-module whose support is the set $e_{1,0},\dots,e_{n,0}$ of objects of $\widehat{C}$.
The proof is divided into three cases.

\underline{Case 1.} Let $C$ be a tilted algebra and $N\in\mod(\widehat{C})$. Then ${}_{\widehat{C}}(\widehat{P}_{x},N)\cong N(x)$, for any $x\in
ob(\widehat{C})$.  Hence the number of indecomposable projective modules in $\mod(\widehat{C})$ belonging to the support of the functor
${}_{\widehat{C}}(-,N)$ is finite and equals the cardinality of the set $\supp(N)$. Note that this argument works for an arbitrary finite
dimensional algebra $C$.

\underline{Case 2.} Let $C$ be a tilted algebra of a Dynkin type $\Delta$. Recall that the stable part $\Gamma_{\widehat{C}}^s$ of the
Auslander-Reiten quiver  $\Gamma_{\widehat{C}}$ of $\widehat{C}$ is of the form $\mathbb{Z}\Delta$. Assume that $P$ is an indecomposable direct
summand of $C$. Since $P$ is not a projective-injective $\widehat{C}$-module, it lies on a stable section $\Sigma$ in $\ZZ\Delta$
\cite[VIII.1]{AsSiSk}. The repetitive category $\widehat{C}$ is locally representation finite by \cite{AHR} (see also \cite[Theorem 5.1]{JSk}) and so
we conclude that the support of the functor ${}_{\widehat{C}}(-,N)$ is finite. This yields the existence of some integers $k,m$ such that $k>m$ and
the support of ${}_{\widehat{C}}(-,N)$ is contained in the full subquiver $\CD$ of $\Gamma_{\widehat{C}}$ consisting of all indecomposable modules
lying between $\tau^{k}\Sigma$ and $\tau^{m}\Sigma$ (more formally, all modules being both successors of $\tau^{k}\Sigma$ and predecessors of
$\tau^{m}\Sigma$).

In what follows we assume that the module $P$ is a successor of the module $N$ and hence $k>m\geq 0$. For other cases the arguments are similar.

Assume now that $X$ is an arbitrary indecomposable non-projective (and non-injective) $\widehat{C}$-module. Then $\Omega(X)$ is a proper predecessor
of  $X$ and $\Omega^{-1}(X)$ is a proper successor of $X$. Indeed, this follows from the fact that there are non-zero homomorphisms $\Omega(X)\ra
P(X)$, $P(X)\ra X$, $X\ra I(X)$ and $I(X)\ra\Omega^{-1}(X)$ where $P(X)$ and $I(X)$ denote the projective cover of $X$ and the injective envelope of
$X$ in $\mod(\widehat{C})$, respectively (recall that there are no oriented cycles in $\Gamma_{\widehat{C}}$). This implies that for $i>0$ we have
$\tau^{1-i}\Omega^{-i}(P)\in\tau^{l}\Sigma$ for some $l<0$ and thus $\tau^{1-i}\Omega^{-i}(P)\notin\CD$. Moreover, if $1-i>k$ we get
$\tau^{1-i}\Omega^{-i}(P)\in\tau^{s}\Sigma$ for some $s>k$ and hence $\tau^{1-i}\Omega^{-i}(P)\notin\CD$ also in this case. Therefore we conclude
that if $\tau^{1-i}\Omega^{-i}(P)\in\CD$, then $i\in\{-k+1,-k+2,\dots,-1,0\}$. Note that any direct summand of a module $\tau^{1-i}\Omega^{-i}(C)$ is
of the form $\tau^{1-i}\Omega^{-i}(P)$, for some direct summand $P$ of $C$. Together with Case 1, these arguments yield the class $\CK_{C}$ is
hs-finite in the case $C$ is tilted of Dynkin type.

\underline{Case 3.} Let $C$ be a tilted algebra of Euclidean or wild type $\Delta$. Then the Auslander-Reiten quiver $\Gamma_{\widehat{C}}$ of
$\widehat{C}$  has a decomposition $$\Gamma_{\widehat{C}}=\bigvee_{q\in\mathbb{Z}}(\mathcal{X}_q\vee \mathcal{C}_q)$$ such that for each $q \in
\mathbb{Z}$, $\mathcal{X}_q$ is a component whose stable part $\mathcal{X}_q^s$ is of the form $\mathbb{Z}\Delta$ and $\mathcal{C}_q$ is an infinite
family of components whose stable part $\mathcal{C}_q^s$ is a union either of stable tubes (if $\Delta$ is of Euclidean type \cite{ANSk}, \cite{Sk4})
or of components of the form $\mathbb{ZA}_{\infty}$ (if $\Delta$ is of wild type \cite{ErKeSk}*{3.5}). Moreover, the following statements hold:
\begin{enumerate}[\rm (a)]
\item for each  pair $p,\,q\in\mathbb{Z}$ with $q>p$,
 we have $\mbox{Hom}\,_{\widehat{C}}(\mathcal{X}_q,\mathcal{X}_p\vee \mathcal{C}_p)=0$
 and $\mbox{Hom}\,_{\widehat{C}}(\mathcal{C}_q, \mathcal{C}_p\vee\mathcal{X}_{p+1})=0$;
\item for each  $q\in\mathbb{Z}$, we have
 $\nu_{\widehat{C}}(\mathcal{X}_q)=\mathcal{X}_{q+2}$ and
 $\nu_{\widehat{C}}(\mathcal{C}_q)=\mathcal{C}_{q+2}$;
\item for each  $q\in\mathbb{Z}$, we have $\mbox{Hom}\,_{\widehat{C}}(\mathcal{X}_q,\mathcal{X}_p \vee \mathcal{C}_p)=0$,
 $\mbox{Hom}\,_{\widehat{C}}(\mathcal{C}_q,\mathcal{C}_p \vee \mathcal{X}_{p+1})=0$ for $p>q+2$;
\item for each  $q\in\mathbb{Z}$, we have $\Omega_{\widehat{C}}(\mathcal{C}^s_{q+1})=\mathcal{C}^s_q$
 and $\Omega_{\widehat{C}}(\mathcal{X}^s_{q+1})=\mathcal{X}^s_q$.
\end{enumerate}
Let $N \in\mathcal{X}_t\vee \mathcal{C}_t$ for some $t\in \ZZ$. By (a) we have that the support of $_{\widehat{C}}(-, N)$ consists of modules from
$\bigvee_{p\leq t}(\mathcal{X}_p\vee \mathcal{C}_p)$. Hence by (c) we obtain that the support of $_{\widehat{C}}(-, N)$ is contained in the family
$$\mathcal{D}=(\mathcal{X}_{t-3}\vee \mathcal{C}_{t-3})\vee(\mathcal{X}_{t-2}\vee \mathcal{C}_{t-2}) \vee (\mathcal{X}_{t-1}\vee \mathcal{C}_{t-1})\vee
  (\mathcal{X}_{t}\vee \mathcal{C}_{t}).$$
Assume that $P$ is an indecomposable direct summand of $C$. Since $P$ is not a projective-injective $\widehat{C}$-module, we obtain that $P\in
\mathcal{X}_{0}^{s}\vee \mathcal{C}_{0}^{s}\vee\mathcal{X}_{1}^{s}\vee\mathcal{C}_{1}^{s}$, see the description of supports of indecomposable
$\widehat{C}$-modules in \cite{ANSk,HW} and \cite{ErKeSk}*{3.5}. Fix some $i\in\ZZ$. Then it follows by (d) that
$\tau^{1-i}\Omega^{-i}(P)\in\mathcal{X}_{i}^{s}\vee \mathcal{C}_{i}^{s}\vee\mathcal{X}_{i+1}^{s}\vee \mathcal{C}_{i+1}^{s}$. Therefore we conclude
that if $\tau^{1-i}\Omega^{-i}(P)\in\CD$, then at least one of the following conditions holds: $\mathcal{X}_{i}^{s}\subseteq\CD$,
$\mathcal{C}_{i}^{s}\subseteq\CD$, $\mathcal{X}_{i+1}^{s}\subseteq\CD$ or $\mathcal{C}_{i+1}^{s}\subseteq\CD$. Equivalently, if
$\tau^{1-i}\Omega^{-i}(P)\in\CD$, then $i\in\{t-4,t-3,\dots,t\}$. As before, any direct summand of a module $\tau^{1-i}\Omega^{-i}(C)$ is of the form
$\tau^{1-i}\Omega^{-i}(P)$, for some direct summand $P$ of $C$. Hence, together with Case 1, these arguments yield the class $\CK_{C}$ is hs-finite
in the case $C$ is tilted of Euclidean or wild type.

Summing up, the above three cases show that the class $\CK_{C}$ is a hs-finite class of $\mod(\widehat{C})$ for any tilted algebra $C$.
\end{proof}

We are now in a position to prove the first main result of the paper.

\begin{thm} Assume that $C$ is a tilted algebra and $\widehat{C},\check{C},\wt{C}$ are the associated repetitive category, cluster repetitive category
            and cluster-tilted algebra, respectively. The following assertions hold.
\begin{enumerate}[\rm(1)]
	\item The functor $\phi\colon \mod(\widehat{C})\ra\mod(\check{C})$ is admissible.
	\item We have $\KG(\wt{C})=\KG(\check{C})\leq\KG(\widehat{C})$.
\end{enumerate}
\end{thm}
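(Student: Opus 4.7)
The plan is simply to assemble the pieces that Section 2 and the opening of Section 3 have already prepared. For assertion (1), I would apply Theorem \ref{2.6} to the functor $\phi$ supplied by Proposition 3.2, taking the witness class to be $\CR_{\phi}:=\add(\CK_{C})$. The hypotheses to verify are: $\phi$ is $K$-linear, additive, full and dense; $\Ker(\phi)$ is exactly the ideal of morphisms factorizing through $\CR_{\phi}$; and $\CR_{\phi}$ is a contravariantly finite class in $\mod(\widehat{C})$. The first two items are \emph{verbatim} what Proposition 3.2 asserts, so no further work is needed there. The missing piece—contravariant finiteness of $\add(\CK_{C})$—would come from Proposition 3.3 (which establishes that $\CK_{C}$ is hs-finite) followed by Lemma \ref{2.7} (which upgrades hs-finiteness of a class to contravariant finiteness of its additive closure). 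Once the hypotheses of Theorem \ref{2.6} are in place, $\phi$ is admissible.

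For assertion (2), admissibility of $\phi$ together with Proposition \ref{2.2} immediately yields the inequality $\KG(\check{C})\leq\KG(\widehat{C})$. Theorem 3.1 (the first theorem of Section 3) provides the equality $\KG(\check{C})=\KG(\wt{C})$, obtained by exhibiting the cluster duplicated algebra $\overline{C}$ as a fundamental domain of the Galois covering $\check{C}\to\wt{C}\cong\check{C}/\langle\nu_{\check{C}}\rangle$ and invoking Theorem \ref{2.4}. Chaining these two facts gives precisely
\[
\KG(\wt{C})=\KG(\check{C})\leq\KG(\widehat{C}),
\]
which is the statement to be proved.

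I do not expect any genuine obstacle inside this argument; the substantive work has been completed upstream in Proposition 3.3, whose case analysis (Dynkin, Euclidean, wild) uses the known shape of $\Gamma_{\widehat{C}}$ together with the action of $\Omega$ and $\tau$ on its components, and in Proposition 3.2, which is quoted from \cite{ABS}. The only small bookkeeping point at this stage is the choice to instantiate Theorem \ref{2.6} with $\CR_{\phi}=\add(\CK_{C})$ rather than with $\CK_{C}$ itself: this matches the factorization statement of Proposition 3.2 exactly and makes Lemma \ref{2.7} directly applicable, so that no ad hoc construction of approximations is needed.
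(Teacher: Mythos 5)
Your proposal is correct and follows the paper's own argument essentially verbatim: assertion (1) is obtained by combining Proposition 3.2, Proposition 3.3, Lemma \ref{2.7} and Theorem \ref{2.6} (with the witness class $\add(\CK_{C})$, exactly as the factorization statement of Proposition 3.2 requires), and assertion (2) follows from the resulting inequality $\KG(\check{C})\leq\KG(\widehat{C})$ together with the equality $\KG(\check{C})=\KG(\wt{C})$ from Theorem 3.1. No gaps; your bookkeeping remark about instantiating the criterion with $\add(\CK_{C})$ rather than $\CK_{C}$ itself is precisely how the paper proceeds.
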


\begin{proof} $(1)$ Proposition 3.3 yields the class $\CK_{C}$ is hs-finite, so the functor $\phi\colon$ $\mod(\widehat{C})\ra\mod(\check{C})$ is admissible by
Proposition 3.2, Theorem \ref{2.6} and Lemma \ref{2.7}. The assertion of $(2)$ follows from $(1)$, Theorem \ref{2.6} and Theorem 3.1.
\end{proof}

We shall apply the following classification of Krull-Gabriel dimensions of locally support-finite repetitive $K$-categories over an algebraically
closed  field $K$ \cite{P4}.

\begin{thm} Assume that $K$ is an algebraically closed field and $A$ is a finite dimensional basic and connected $K$-algebra such that $\widehat{A}$ is
            locally support-finite. Then $\KG(\widehat{A})\in\{0,2,\infty\}$ and the following assertions hold.
\begin{enumerate}[\rm(1)]
	\item $\KG(\widehat{A})=0$ if and only if $\widehat{A}\cong\widehat{B}$ where $B$ is some tilted algebra of Dynkin type.
	\item $\KG(\widehat{A})=2$ if and only if $\widehat{A}\cong\widehat{B}$ where $B$ is some representation-infinite tilted algebra of Euclidean type.
	\item $\KG(\widehat{A})=\infty$ if and only if $\widehat{A}$ is wild or $\widehat{A}\cong\widehat{B}$ where $B$ is a tubular algebra. \epv
\end{enumerate}
\end{thm}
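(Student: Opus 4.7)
The plan is to reduce the computation of $\KG(\widehat{A})$ to that of the trivial extension $T(A)=A\ltimes D(A)$ via the Galois-covering machinery of Section~2, and then to carry out a case analysis by representation type. The natural projection $\widehat{A}\to\widehat{A}/\langle\nu_{\widehat{A}}\rangle\cong T(A)$ is a Galois covering with admissible torsion-free covering group $\langle\nu_{\widehat{A}}\rangle\cong\ZZ$. Since $\widehat{A}$ is the bound quiver category of a locally finite quiver it is intervally-finite, and by hypothesis it is locally support-finite; Theorem~\ref{2.3}(2) therefore yields a fundamental domain $B$ of $\widehat{A}$. Theorem~\ref{2.4}, applied with $R=\widehat{A}$ and orbit category $T(A)$, gives $\KG(\widehat{A})=\KG(B)=\KG(T(A))$, so the problem reduces to computing $\KG(T(A))$.

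Next I would invoke the known structural classification of basic connected algebras $A$ with locally support-finite $\widehat{A}$, drawn from \cites{AHR,JSk,ANSk,HW,ErKeSk} and summarised in Case~3 of the proof of Proposition~3.3: up to isomorphism of $\widehat{A}$, the algebra $A$ is forced to be tilted of Dynkin type, representation-infinite tilted of Euclidean type, tubular, or tilted of wild type with $\widehat{A}$ still locally support-finite. For tilted $A$ of Dynkin type, \cites{AHR,JSk} give that $\widehat{A}$ is locally representation-finite, so the locally bounded version of Auslander's theorem \cite{Au} yields $\KG(\widehat{A})=0$. For tilted $A$ of wild type, $T(A)$ is wild, and one constructs an infinite strictly descending chain of Serre subcategories of $\CF(T(A))$ by exhibiting hom-functors of unbounded complexity supported on the wild components $\CX_q^s\cong\ZZ\Delta$, forcing $\KG=\infty$. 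For tubular $A$, $T(A)$ is tame non-domestic of polynomial growth, and the one-parameter families of stable tubes of unbounded rank from the $\CC_q$ yield such a chain, giving $\KG=\infty$.

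The hardest case is $A$ representation-infinite tilted of Euclidean type, where $T(A)$ is a domestic tame self-injective algebra and the claim is $\KG(T(A))=2$. For the lower bound, Auslander gives $\KG(T(A))\neq 0$ since $T(A)$ is representation-infinite, while the appearance of infinite $\tau$-orbits inside the stable tubes $\CC_q^s$ produces simple functors in $\CF(T(A))/\CF(T(A))_0$ that are not of finite length, excluding $\KG=1$. The upper bound $\KG(T(A))\leq 2$ is the main obstacle: using the component decomposition from Case~3 of Proposition~3.3 --- the stable parts $\CX_q^s\cong\ZZ\Delta$ joined by finite families of stable tubes $\CC_q^s$ of bounded rank --- one must show that $\CF(T(A))_1$ already absorbs every finitely presented functor up to finite length, so that $\CF(T(A))/\CF(T(A))_1$ itself has finite length. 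The required hammock-style analysis is carried out in \cite{P4}, first reducing to the fundamental domain $B$ via Theorem~\ref{2.4} and then appealing to existing results on the Krull-Gabriel dimension of domestic tame self-injective algebras.
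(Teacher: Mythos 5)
The first thing to note is that the paper itself does not prove this statement: Theorem 3.5 is imported verbatim from \cite{P4} (it is the classification of Krull--Gabriel dimensions of locally support-finite repetitive categories proved there), so there is no internal proof to match and your task was in effect to reconstruct the argument of \cite{P4}. Your opening reduction is the right covering-theoretic frame: $\langle\nu_{\widehat{A}}\rangle\cong\ZZ$ is admissible and torsion-free, and Theorem \ref{2.3}(2) together with Theorem \ref{2.4} gives $\KG(\widehat{A})=\KG(B)=\KG(T(A))$ for a fundamental domain $B$. (A small but real flaw here: ``bound quiver category of a locally finite quiver'' does \emph{not} imply intervally-finite in general; intervally-finiteness of $\widehat{A}$ has to be read off from the triangular shape of $Q_{\widehat{A}}$.)

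The genuine gaps come afterwards. First, in the crucial Euclidean case you identify the upper bound $\KG\leq 2$ as ``the main obstacle'' and then resolve it by ``appealing to existing results on the Krull--Gabriel dimension of domestic tame self-injective algebras'' carried out in \cite{P4}. This is circular: those self-injective results (\cite{P4}, Theorem 8.1) are \emph{corollaries} of the very repetitive-category statement you are proving, obtained from it by the same covering theorem. The independent inputs one must actually use, after passing to the finite fundamental domain, are Geigle's computations for tame hereditary/Euclidean tilted and tubular algebras \cite{Geigle1985}, \cite{Ge2}, Krause's exclusion of Krull--Gabriel dimension $1$ \cite{Kr2}, and Auslander's theorem \cite{Au}; none of this analysis is supplied. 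Second, the wild and tubular cases are asserted by hand-waving (``hom-functors of unbounded complexity'', ``tubes of unbounded rank yield such a chain'') rather than argument; what is needed is the theorem that wild locally bounded categories have undefined Krull--Gabriel dimension and Geigle's theorem that tubular algebras do, together with a mechanism (e.g.\ the convex embedding of a tubular algebra or of the fundamental domain into $\widehat{B}$, giving $\KG$ of a convex subcategory as a lower bound) for transporting this to $\widehat{A}$. Third, your structural claim that $A$ ``is forced to be'' tilted of Dynkin, Euclidean or wild type, or tubular, is both misattributed and not what the theorem requires: Case 3 of Proposition 3.3 concerns tilted algebras $C$ only and classifies nothing, and assertion (3) of the theorem deliberately allows arbitrary wild $\widehat{A}$ that need not be of the form $\widehat{B}$ for a tilted $B$. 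The genuine input at this point is the classification of \emph{tame} locally support-finite repetitive categories as $\widehat{B}$ with $B$ tilted of Dynkin or Euclidean type or tubular (Skowro\'nski et al., see \cite{Sk4}, \cite{ANSk}, \cite{ErKeSk}), which must be quoted, not derived from Proposition 3.3. As it stands, the proposal is a plausible outline whose hard steps are either deferred to the source being reconstructed or argued circularly.
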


The following theorem is the second main result of the paper. In this theorem we refine the assertion $(2)$ of Theorem 3.4 and determine the
Krull-Gabriel  dimension of cluster-tilted algebras. We also conclude that the class of cluster-tilted algebras supports the conjecture of Prest.

\begin{thm} \label{3.6}
             Assume that $K$ is an algebraically closed field, $C$ is a tilted $K$-algebra and $\widehat{C},\check{C},\wt{C}$ are the associated repetitive
           category, cluster repetitive category and cluster-tilted algebra, respectively. Then $\KG(\wt{C})=\KG(\check{C})=\KG(\widehat{C})\in\{0,2,\infty\}$
            and the following assertions hold.
\begin{enumerate}[\rm(1)]
	\item $C$ is tilted of Dynkin type if and only if $\KG(\wt{C})=0$.
	\item $C$ is tilted of Euclidean type if and only if $\KG(\wt{C})=2$.
	\item $C$ is tilted of wild type if and only if $\KG(\wt{C})=\infty$.
\end{enumerate} In particular, a cluster-tilted algebra $\wt{C}$ has finite Krull-Gabriel dimension if and only if $\wt{C}$ is of domestic representation type.
\end{thm}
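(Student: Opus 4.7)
The plan is to upgrade the inequality $\KG(\wt{C})=\KG(\check{C})\leq\KG(\widehat{C})$ of Theorem~3.4 to an equality and then compute the common value via Theorem~3.5. First I would recall that $\widehat{C}$ is locally support-finite for every tilted algebra $C$ — a well-known fact in each of the three types (Assem--Happel--Roldan for Dynkin, Assem--Nehring--Skowro\'nski together with Skowro\'nski's work on selfinjective algebras for Euclidean, and Erdmann--Kerner--Skowro\'nski for wild). Theorem~3.5 then applies and yields $\KG(\widehat{C})\in\{0,2,\infty\}$ according to whether $C$ is tilted of Dynkin, Euclidean or wild type; combined with Theorem~3.4 this already provides the three upper bounds on $\KG(\wt{C})$.

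For the matching lower bounds I would proceed type by type. The Dynkin case is immediate. In the wild case, the canonical projection $\wt{C}=C\ltimes E\twoheadrightarrow C$ exhibits $C$ as a quotient of $\wt{C}$, so $\wt{C}$ inherits wildness from $C$; since every wild finite-dimensional algebra has undefined Krull--Gabriel dimension by a classical result (essentially via the existence of super-decomposable pure-injectives), we obtain $\KG(\wt{C})=\infty$.

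The Euclidean case is the most delicate. Here I would construct an admissible functor $\varphi\colon\mod(\wt{C})\to\mod(C)$ via base change along the same quotient, $\varphi(M)=M\otimes_{\wt{C}}C\cong M/ME$. Density is clear since every $C$-module already satisfies $ME=0$. For any $U\in\CF(C)$ presented by ${}_{C}(-,X)\to{}_{C}(-,Y)\to U\to 0$, the adjunction $\Hom_{C}(M/ME,Z)\cong\Hom_{\wt{C}}(M,Z)$ valid for $Z\in\mod(C)$ shows that $U\varphi$ is the cokernel of a morphism of hom-functors ${}_{\wt{C}}(-,X)\to{}_{\wt{C}}(-,Y)$ in $\CG(\wt{C})$; hence $U\varphi\in\CF(\wt{C})$, $\varphi$ is admissible, and Proposition~2.1 gives $\KG(C)\leq\KG(\wt{C})$. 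Since $C$ is tilted of Euclidean type, $\KG(C)=2$ (an extension of Geigle's theorem for tame hereditary, which can itself be approached by applying the same admissible-functor argument to the quotient $T(C)\twoheadrightarrow C$ and combining it with $\KG(T(C))=\KG(\widehat{C})=2$ from Theorem~2.4 and Theorem~3.5, together with a separate argument ruling out $\KG(C)=1$). This yields $\KG(\wt{C})\geq 2$ and matches the upper bound.

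Assembling the three cases gives $\KG(\wt{C})=\KG(\check{C})=\KG(\widehat{C})\in\{0,2,\infty\}$ with the stated correspondence, whence the concluding clause on domestic representation type is immediate. I expect the principal obstacle to be the Euclidean lower bound: the admissible-functor comparison $\KG(C)\leq\KG(\wt{C})$ is routine once the base-change adjunction is in place, but pinning down $\KG(C)=2$ for an arbitrary Euclidean-tilted $C$ either requires citing a non-trivial extension of Geigle's theorem beyond tame hereditary, or a bootstrap through $\widehat{C}$ which on its own only delivers $\KG(C)\leq 2$ and must be supplemented by a separate lower-bound argument, for instance by exhibiting a functor in $\CF(C)$ not lying in $\CF(C)_{1}$ that arises from an infinite family of stable tubes or from a generic module of $C$.
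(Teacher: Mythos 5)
Your upper bounds (via Theorems 3.4 and 3.5) and your Dynkin and wild cases follow the paper's line; in the wild case the paper quotes \cite{BMR} for the wildness of $\wt{C}$ where you use the quotient map $\wt{C}\twoheadrightarrow C$, which is equally good, although your parenthetical justification that wildness forces $\KG(\wt{C})=\infty$ ``via super-decomposable pure-injectives'' points at the wrong fact: the known existence results for super-decomposables over wild algebras assume a countable base field, which Theorem 3.6 does not, and what is really invoked (implicitly also in the paper) is the classical fact that wild algebras have undefined Krull--Gabriel dimension. The genuine gap is the Euclidean lower bound. Your comparison $\KG(C)\leq\KG(\wt{C})$ through the admissible functor $M\mapsto M/ME$ is correct, but the input you then need, namely $\KG(C)=2$ for every tilted algebra $C$ of Euclidean type, is both left unproved (you flag it yourself as the principal obstacle) and false as stated: there exist representation-finite tilted algebras of Euclidean type, and for such $C$ Auslander's theorem gives $\KG(C)=0$, so the passage to the quotient $C$ yields no lower bound at all in exactly this case. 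Even for representation-infinite $C$ you would have to establish an extension of Geigle's theorem \cite{Geigle1985} beyond tame hereditary algebras, which is precisely the kind of result the theorem is designed to avoid. Hence the Euclidean case is not proved.

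The paper closes this case by a much shorter argument which you should substitute for yours: if $C$ is tilted of Euclidean type, then $\wt{C}$ is representation-infinite by \cite{BMR}, hence $\KG(\wt{C})\neq 0$ by Auslander's theorem \cite{Au}, and $\KG(\wt{C})\neq 1$ by Krause's theorem \cite{Kr2} that no finite dimensional algebra has Krull--Gabriel dimension $1$; together with $\KG(\wt{C})=\KG(\check{C})\leq\KG(\widehat{C})=2$ from Theorems 3.4 and 3.5 this forces $\KG(\wt{C})=\KG(\check{C})=\KG(\widehat{C})=2$, with no need to know $\KG(C)$ itself. Note also that when you apply Theorem 3.5 in the Euclidean case you must use that $\widehat{C}\cong\widehat{B}$ for some representation-infinite tilted algebra $B$ of Euclidean type (for instance the underlying hereditary algebra), since assertion (2) of Theorem 3.5 is stated only for representation-infinite $B$; this is harmless, but it matters precisely because the representation-finite Euclidean-tilted $C$ is the situation in which your own lower-bound strategy breaks down.
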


\begin{proof} We apply freely Theorem 3.5 and the assertion $(2)$ of Theorem 3.4 stating that $\KG(\wt{C})=\KG(\check{C})\leq\KG(\widehat{C})$. We prove  all
assertions simultaneously.

If $C$ is of Dynkin type, then $\KG(\wt{C})=\KG(\check{C})\leq\KG(\widehat{C})=0$, so
$\KG(\wt{C})=\KG(\check{C})=\KG(\widehat{C})=0$.\footnote{Observe that this argument gives another proof of the fact that cluster-tilted algebras of
Dynkin type are representation finite.}

If $C$ is of Euclidean type, then $\KG(\wt{C})=\KG(\check{C})\leq\KG(\widehat{C})=2$, but $\KG(\wt{C})\neq 0$ since $\wt{C}$ is of infinite
representation  type (see \cite{BMR}) and $\KG(\wt{C})\neq 1$ by \cite{Kr2}. This yields $\KG(\wt{C})=\KG(\check{C})=\KG(\widehat{C})=2$.

If $C$ is of wild type, then $\wt{C}$ is also of wild type by \cite{BMR} and hence we obtain $\KG(\wt{C})=\KG(\check{C})=\KG(\widehat{C})=\infty$.

Since the algebra $C$ is a tilted algebra either of Dynkin, or of Euclidean or of wild type, we conclude that the above implications can be replaced by
equivalences. This also yields the fact that $\KG(\wt{C})=\KG(\check{C})=\KG(\widehat{C})\in\{0,2,\infty\}$. Moreover, if $C$ is tilted of Dynkin or
Euclidean type, then $\wt{C}$ is of domestic representation type (assuming that finite type is contained in domestic type). This implies that the
class of cluster-tilted algebras supports the conjecture of Prest on Krull-Gabriel dimension and domestic algebras.
\end{proof}

We finish the section with natural application of the above theorem to the theory of \textit{super-decomposable pure-injective modules}. Assume that
$R$  is a ring with a unit. An $R$-module $M\neq 0$ is \textit{super-decomposable} if and only if $M$ does not have an indecomposable direct summand.
For the concept of \textit{pure-injectivity} we refer to \cite{JeLe}. The problem of the existence of super-decomposable pure-injective $R$-modules
is studied for the first time in \cite{Zi}. The case when $R$ is a finite dimensional algebra over a field is studied in many papers, see
\cite{KaPa3} for an up-to-date list of results concerning this case and \cite[Theorem 8.3]{P4} for the most recent one about self-injective algebras.
It is conjectured by Prest that if $R$ is a finite dimensional algebra over an algebraically closed field, then $R$ is of domestic representation
type if and only if there is no super-decomposable pure-injective $R$-module, see for example \cite{Pr2}. This conjecture is sometimes restricted
only to countable fields, see \cite{KaPa3} for details. The following theorem supports the conjecture.

\begin{cor} Assume that $C$ is a tilted algebra over an algebraically closed field $K$ and $\wt{C}$ is the corresponding cluster-tilted algebra. If $\wt{C}$ is of domestic type, then it has no super-decomposable pure-injective module. The converse implication holds if the field $K$ is countable.
\end{cor}

\begin{proof} If $\wt{C}$ is domestic, then by assertions $(1),(2)$ of Theorem 3.6 we have that $\KG(\wt{C})$ is finite and thus super-decomposable pure-injective $\wt{C}$-module does not exist, see for example \cite{Pr2}. Conversely, if $\wt{C}$ is non-domestic, then $\wt{C}$ is wild, so it possesses a super-decomposable pure-injective module by \cite[Theorem 3.2]{P5}.
\end{proof}

\appendix
\section{Appendix (by Grzegorz Bobi{\'n}ski)}

Throughout this appendix $K$ is a fixed field. All considered algebras are finite dimensional $K$-algebras. For simplicity we also assume that all
considered categories are Krull--Schmidt $K$-categories with finite dimensional homomorphism spaces.

The aim of this appendix is to present an alternative proof of the following result proved in Theorem~\ref{3.6}.

\begin{thm}
Let $C$ be a cluster-tilted algebra.
\begin{enumerate}
\renewcommand{\labelenumi}{\textup{(\arabic{enumi})}}

\item If $C$ is of Dynkin type, then $\KG (C) = 0$.

\item If $C$ is of Eulidean type, then $\KG (C) = 2$.

\item If $C$ is of wild type, then $\KG (C) = \infty$.

\end{enumerate}
\end{thm}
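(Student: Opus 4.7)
The plan is to treat the three cases separately, with Dynkin and wild reduced to standard facts about representation type, and the Euclidean case handled as the main application of Geigle's Corollary~2.9 from \cite{Ge2}.

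For the Dynkin case, I would invoke the result of Buan, Marsh and Reiten \cite{BMR} that cluster-tilted algebras of Dynkin type are representation-finite, and then apply Auslander's theorem \cite{Au} to conclude $\KG(C) = 0$. For the wild case, I would again appeal to \cite{BMR} to inherit wildness at the cluster-tilted level, and then use the fact that wild finite-dimensional $K$-algebras over an algebraically closed $K$ have infinite Krull--Gabriel dimension, as exploited in the main-text proof of Theorem~\ref{3.6}.

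The Euclidean case is the substantive one. The lower bound $\KG(C) \geq 2$ is standard: infinite representation type excludes $\KG = 0$ by \cite{Au}, and Krause's theorem \cite{Kr2} excludes $\KG = 1$. For the upper bound, the idea is to invoke Geigle's Corollary~2.9 directly. Concretely, I would present the functor category $\CF(C)$ in the form required by the corollary---typically by exhibiting a Serre subcategory of $\CF(C)$ whose quotient isolates a tame hereditary piece of known Krull--Gabriel dimension~$2$, or by comparing $\CF(C)$ with the functor category of an associated tame hereditary algebra along a functor to which Geigle's criterion applies---and thereby conclude $\KG(C) \leq 2$. Combined with the lower bound this yields $\KG(C) = 2$. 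This route deliberately bypasses the cluster repetitive category $\check{C}$ and the Galois covering framework used in the main text.

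The main obstacle is verifying the hypotheses of Geigle's Corollary~2.9 in the cluster-tilted setting. Since $C$ is generally not hereditary---indeed its global dimension is typically infinite---Geigle's result cannot be applied to $C$ directly, and must instead be routed through a tame hereditary comparison category. Identifying the correct Serre subcategory (or equivalent localization) of $\CF(C)$ and checking that it meets Geigle's criteria is the technical heart of the argument; I expect it to require a careful analysis of the Auslander--Reiten structure of $\mod(C)$ together with the way the bimodule $\Ext^2$ defining the trivial-extension presentation of $C$ modifies the module category of the underlying tilted algebra.
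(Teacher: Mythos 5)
Your Dynkin and wild cases are fine as far as they go (they reproduce the main-text argument of Theorem~\ref{3.6}: representation-finiteness from \cite{BMR} plus \cite{Au}, and wildness from \cite{BMR} plus the fact about wild algebras used there), and your lower bound $\KG(C)\geq 2$ in the Euclidean case via \cite{Au} and \cite{Kr2} is correct. But the Euclidean upper bound --- which you yourself single out as the heart of the matter --- is not proved: you propose to ``exhibit a Serre subcategory of $\CF(C)$ whose quotient isolates a tame hereditary piece'' and defer the verification of Geigle's hypotheses, and that is precisely the step that does not work in the shape you describe. Geigle's Corollary~2.9 (Proposition~\ref{prop Geigle}) is not a statement about arbitrary Serre subcategories or localizations of $\CF(C)$; it compares $\KGdim(\CF(\CC))$ with $\KGdim(\CF(\CC/[\CB]))$ when one quotients a \emph{coherent additive category} $\CC$ by the ideal of maps factoring through finitely many indecomposables, and its hypotheses (finite presentability of the simple functors $S_X$ for $X$ in $\CB$) have no evident verification inside $\mod(C)$ or $\CF(C)$ alone.

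The missing idea in the paper's proof is to apply Geigle one level up, in the cluster category $\CC_H$, which sits above both $\mod(C)$ and $\mod(H)$. Since $\CC_H$ is triangulated \cite{Ke1} it is coherent, and since it has almost split triangles \cite{BMRRT} every indecomposable object has a source map, so $S_X\in\CF(\CC_H)$ for every indecomposable $X$; the subcategory $\CB=\add\Sigma T$ has only finitely many indecomposables, so Proposition~\ref{prop Geigle} gives $\KGdim(\CF(\CC_H))=\KGdim(\CF(\CC_H/[\add\Sigma T]))$, and by \cite{BMR} (Proposition~\ref{prop BMR}) the latter category is equivalent to $\mod(C)$, whence $\KG(C)=\KGdim(\CF(\CC_H))$ (Corollary~\ref{coro}). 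Running the same argument with the cluster-tilting object $T=H$ gives $\KG(H)=\KGdim(\CF(\CC_H))$, hence $\KG(C)=\KG(H)$ (Theorem~\ref{theo mainbis}); now all three cases follow uniformly from the hereditary computations \cite{Au}, \cite{Geigle1985}, \cite{Baer}. Note that this route also removes your reliance on a general ``wild implies $\KG=\infty$'' principle, since the wild case is settled by Baer's theorem for wild \emph{hereditary} algebras. As written, your proposal lacks the cluster-category intermediary and the source-map verification of Geigle's hypotheses, so the Euclidean (and, in this approach, the wild) upper bound remains unestablished.
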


In fact we prove the following equivalent version of the above result.

\begin{thm} \label{theo mainbis}
If $H$ is a hereditary algebra and $C$ is a cluster-tilted algebra of type $H$, then $\KG (C) = \KG (H)$.
\end{thm}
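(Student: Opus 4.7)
The plan is to exploit the standard description of $\mod(C)$ as an additive quotient of the cluster category and then invoke Geigle's Corollary 2.9 from \cite{Ge2} to compare Krull--Gabriel dimensions of functor categories. Write $\mathcal{C}_H = \mathcal{D}^b(H)/\tau^{-1}[1]$ for the cluster category, and for an additive Krull--Schmidt $K$-category $\mathcal{A}$ write $\mathcal{F}(\mathcal{A})$ for the abelian category of finitely presented contravariant additive $K$-linear functors $\mathcal{A} \to \mod(K)$.

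First, I would invoke the theorem of Buan--Marsh--Reiten: if $T$ is a basic cluster-tilting object in $\mathcal{C}_H$ with $C = \End_{\mathcal{C}_H}(T)^{\op}$, then the functor $\Hom_{\mathcal{C}_H}(T, -)$ induces an equivalence of additive $K$-categories
\[
\mathcal{C}_H / [\add T[1]] \xrightarrow{\sim} \mod(C).
\]
Passing to finitely presented functor categories yields $\mathcal{F}(\mod(C)) \simeq \mathcal{F}(\mathcal{C}_H/[\add T[1]])$, so their Krull--Gabriel dimensions agree.

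Second, Geigle's Corollary 2.9 controls $\KG$ under quotients by ``small'' ideals. Since $\add T[1]$ is generated by only finitely many indecomposables, the Serre subcategory of $\mathcal{F}(\mathcal{C}_H)$ it cuts out has Krull--Gabriel dimension zero, and Geigle's result then delivers $\KG(\mathcal{C}_H/[\add T[1]]) = \KG(\mathcal{C}_H)$. Finally, the indecomposables of $\mathcal{C}_H$ are in bijection with $\ind(H) \sqcup \{P[1] : P \text{ indec.\ projective}\}$, matching $\mod(H)$ up to finitely many extra objects. A second application of Geigle's result---now comparing $\mathcal{C}_H$ with $\mod(H)$ through the natural additive functor---should yield $\KG(\mathcal{C}_H) = \KG(\mod(H)) = \KG(H)$. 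Chaining the three equalities gives $\KG(C) = \KG(H)$, which is exactly Theorem~\ref{theo mainbis}.

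The principal difficulty is the careful application of Geigle's Corollary 2.9, originally phrased for abelian categories and Serre quotients, to the additive (and partly triangulated) situation at hand. One must verify that the ``finite corrections''---namely the ideal $[\add T[1]]$ and the finitely many shifted projectives $P[1]$ separating $\mathcal{C}_H$ from $\mod(H)$---genuinely correspond to Krull--Gabriel dimension zero Serre subcategories of the appropriate functor categories. The final comparison step is also subtle: $\mathcal{C}_H$ and $\mod(H)$ share (almost) the same indecomposables but have rather different morphism spaces, since $\Hom_{\mathcal{C}_H}(X,Y)$ incorporates an extra $\Ext^1_H(X,\tau Y)$ contribution. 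Handling this discrepancy uniformly, so that Geigle's framework genuinely applies, is the main technical hurdle; once it is overcome, the rest is bookkeeping.
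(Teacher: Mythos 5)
Your first two steps are exactly the route the appendix takes: the Buan--Marsh--Reiten equivalence $\CC_H/[\add \Sigma T]\simeq\mod C$ gives $\KG(C)=\KGdim(\CF(\CC_H/[\add\Sigma T]))$, and Geigle's Corollary~2.9 then identifies this with $\KGdim(\CF(\CC_H))$. One caveat even here: Geigle's hypothesis is not merely that $\add\Sigma T$ has finitely many indecomposables, but that each simple functor $S_X$, for $X$ an indecomposable summand of $\Sigma T$, is finitely presented over $\CC_H$; this is equivalent to the existence of a source map for $X$, and it holds because $\CC_H$ has almost split triangles (Buan--Marsh--Reineke--Reiten--Todorov). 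You flag the applicability of Geigle in the additive/triangulated setting as a ``difficulty'' but never verify this hypothesis, and your paraphrase of it (``the Serre subcategory it cuts out has Krull--Gabriel dimension zero'') is not what the corollary actually requires.

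The genuine gap is your third step. Geigle's corollary compares $\CF(\CC)$ with $\CF(\CC/[\CB])$ for a subcategory $\CB$ of $\CC$; it does not apply to ``the natural additive functor'' between $\CC_H$ and $\mod H$, and matching indecomposables up to finitely many shifted projectives proves nothing by itself, precisely because of the extra $\Ext^1_H(X,\tau Y)$-component in $\Hom_{\CC_H}(X,Y)$ that you yourself identify as the unresolved hurdle. The paper closes this step with no extra work: $H$ itself is a cluster-tilting object in $\CC_H$ with $\End_{\CC_H}(H)^{\op}\simeq H$, so the same two results you already used (BMR with $T=H$, giving $\CC_H/[\add\Sigma H]\simeq\mod H$, followed by Geigle with $\CB=\add\Sigma H$) yield $\KG(H)=\KGdim(\CF(\CC_H))$, and the theorem follows by comparing the two equalities. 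In other words, the discrepancy in morphism spaces you worry about is exactly the ideal $[\add\Sigma H]$, and seeing this is the content of applying BMR's theorem to the cluster-tilting object $H$ rather than something to be handled ``uniformly'' by hand. As written, your proposal leaves the key identification $\KGdim(\CF(\CC_H))=\KG(H)$ unproved.
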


The above mentioned equivalence follows from the following well-known description of the Krull--Gabriel dimension of the hereditary algebras.

\begin{prop}
Let $H$ be a hereditary algebra.
\begin{enumerate}
\renewcommand{\labelenumi}{\textup{(\arabic{enumi})}}

\item If $H$ is of Dynkin type, then $\KG (H) = 0$.

\item If $H$ is of Eulidean type, then $\KG (H) = 2$.

\item If $H$ is of wild type, then $\KG (H) = \infty$.

\end{enumerate}
\end{prop}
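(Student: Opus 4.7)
The plan is to prove the three cases separately, combining Gabriel's classification of hereditary algebras by representation type with the appropriate measurement of the functor category $\CF(H)$, invoking only results already cited in the paper.

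For part (1), Gabriel's theorem identifies the Dynkin hereditary algebras as exactly the representation-finite hereditary algebras. Auslander's theorem \cite{Au}*{Corollary 3.14}, already cited in the introduction, then gives $\KG(H) = 0$ immediately, since it equates vanishing of the Krull--Gabriel dimension with finite representation type.

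For part (2), I would invoke Geigle's analysis of the functor category for a tame hereditary algebra. The Auslander--Reiten quiver of a Euclidean hereditary $H$ consists of a preprojective component, a preinjective component, and a $\mathbb{P}^{1}(K)$-family of stable tubes, and Geigle's Corollary 2.9 of \cite{Ge2} -- exactly the input used in the appendix's main proof -- identifies $\CF(H)_{0}$ with the finite-length functors and shows that every remaining functor has finite length in $\CF(H)/\CF(H)_{0}$, giving $\CF(H)_{2} = \CF(H)$ and hence $\KG(H) \leq 2$. The lower bound $\KG(H) \geq 2$ follows from infinite representation type (so $\KG(H) \neq 0$ by Auslander) combined with Krause's result \cite{Kr2} that no algebra has $\KG(A) = 1$.

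For part (3), wild hereditary algebras must have $\KG(H) = \infty$. My plan is to exhibit inside $\CF(H)$ a descending family of Serre subquotients none of which become of finite length, thereby preventing stabilization of the filtration; this reflects the fact that $\mod(H)$ for wild hereditary $H$ controls representation categories of arbitrarily complicated quivers, so $\CF(H)$ cannot be exhausted in finitely many Serre-quotient steps. The main obstacle of the proposal lies precisely in this case: unlike the tame situation, there is no clean finite description of $\CF(H)$, so one must either build such a ladder explicitly (in the spirit of wild-subcategory arguments) or appeal to the deep pure-injective existence results through \cite{Pr2}, since super-decomposable pure-injectives force $\KG = \infty$ and were already exploited for the converse direction in Corollary 3.7.
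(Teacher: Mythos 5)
Your part (1) is fine and coincides with the paper's argument (Gabriel plus \cite{Au}*{Corollary 3.14}). The problems are in (2) and (3). In (2) you cite the wrong result: \cite{Ge2}*{Corollary 2.9} is the statement, used in the Appendix, that factoring out the ideal generated by finitely many indecomposables does not change the Krull--Gabriel dimension; it says nothing about the structure of $\CF(H)$ for a tame hereditary algebra. The result you actually need is Geigle's computation in \cite{Geigle1985}*{Theorem 4.3}, which gives $\KG(H)=2$ outright (so the detour through Auslander and \cite{Kr2} to exclude $0$ and $1$ is unnecessary, though harmless). Moreover, your description is internally inconsistent: if every functor outside $\CF(H)_{0}$ had finite length in $\CF(H)/\CF(H)_{0}$, that would say $\CF(H)_{1}=\CF(H)$, i.e.\ $\KG(H)\leq 1$, contradicting the lower bound you then invoke; the correct picture has $\CF(H)_{1}\neq\CF(H)$ and $\CF(H)_{2}=\CF(H)$.

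Part (3) is a genuine gap, and you admit as much. The paper settles it by citing \cite{Baer}*{Theorem 4.3}, where the nonexistence of the Krull--Gabriel dimension for wild hereditary algebras is proved; you neither cite this nor construct the promised unbounded ladder of Serre subquotients, so as written nothing is proved. Your fallback via super-decomposable pure-injective modules does not close the gap either: the implication ``super-decomposable pure-injective exists $\Rightarrow$ $\KG=\infty$'' is fine, but the existence of such a module over a wild algebra (as in \cite{P5}, used for Corollary 3.7) is only available over a countable base field, whereas the proposition (and the Appendix, which works over an arbitrary field $K$) makes no such assumption; it would also be circular in spirit, since that existence result is itself deeper than the dimension computation you want. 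So (1) stands, (2) needs the correct reference and a corrected filtration statement, and (3) needs Baer's theorem or an actual construction.
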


\begin{proof}
(1) follows from~\cite[Corollary~3.14]{Au}, (2) from~\cite[Theorem~4.3]{Geigle1985}, and (3) from~\cite[Thereom~4.3]{Baer}.
\end{proof}

We recall first the definition of the Krull--Gabriel dimension of an abelian category. Let $\CA$ an abelian category and $\CA_{-1} = 0$. For any
$\alpha$ being either an ordinal number or $-1$, let $\CA_{\alpha + 1}$ be the Serre subcategory of $\CA$ consisting of those objects in $\CA$ which
have finite length after passing to the quotient category $\CA / \CA_\alpha$. Moreover, if $\beta$ is a limit ordinal, then $\CA_\beta =
\bigcup_{\alpha < \beta} \CA_\alpha$. By the Krull--Gabriel dimension $\KGdim (\CA)$ of $\CA$ we mean the smallest ordinal number $\alpha$ such that
$\CA_\alpha = \CA$. If there is not such number, then we put $\KGdim (\CA) = \infty$.

Abelian categories we are interested in are of special form. Namely, let $\CC$ be an additive category and denote by $\CF (\CC)$ the category of all
contravariant finitely presented functors from $\CC$ to the category $\mod K$ of finite dimensional vector spaces. A category $\CC$ is called
coherent, if the category $\CF (\CC)$ is abelian. There are two important examples of coherent categories: abelian and triangulated ones. If $C$ is
an algebra, then we put $\CF (C) = \CF (\mod C)$ and $\KG (C) = \KGdim (\CF (C))$, where $\mod C$ is the category of finite dimensional $C$-modules.

If $\CB$ is a full subcategory of a category $\CC$, then we denote by $[\CB]$ the ideal of morphisms in $\CC$, which factor through objects in $\CB$.
Next, if $X$ is an indecomposable object of $\CC$, then we denote by $S_X$ the functor $S_X \colon \CC \to \mod K$ such that $S_X (X) = K$ and $S_X
(Y) = 0$, for each indecomposable object $Y$ of $\CC$ nonisomorphic to $X$. The following result due to Geigle~\cite[Corollary~2.9]{Ge2} will play a
crucial role in our proof.

\begin{prop} \label{prop Geigle}
Let $\CC$ be a coherent category and $\CB$ be a full subcategory of $\CC$ with only finitely many indecomposable objects up to isomorphism. If $S_X
\in \CF (\CC)$, for each indecomposable object $X$ of $\CB$, then
\[
\KGdim (\CF (\CC)) = \KGdim (\CF (\CC / [\CB])). \eqno \qed
\]
\end{prop}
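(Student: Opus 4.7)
The plan is to realise $\CF(\CC/[\CB])$ as a Serre quotient of $\CF(\CC)$ by a finite-length subcategory, and then invoke the fact that quotienting by a finite-length Serre subcategory does not affect the Krull--Gabriel dimension. Concretely, I would let $\CS \subseteq \CF(\CC)$ be the Serre subcategory generated by the finite family $S_{X_1}, \dots, S_{X_n}$, where $X_1, \dots, X_n$ are the indecomposables of $\CB$. The hypothesis $S_{X_i} \in \CF(\CC)$ is exactly what is needed for these generators to live inside $\CF(\CC)$, and since each $S_{X_i}$ is simple and there are finitely many of them, $\CS$ is of finite length; in particular $\CS \subseteq \CF(\CC)_0$.

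The first step, and the main technical point, is to identify $\CF(\CC)/\CS$ with $\CF(\CC/[\CB])$. The projection $\pi \colon \CC \to \CC/[\CB]$ induces by precomposition a fully faithful embedding $\pi^{\ast} \colon \CF(\CC/[\CB]) \hookrightarrow \CF(\CC)$ whose essential image is exactly the full subcategory of functors vanishing on every indecomposable of $\CB$. Using that each $S_X$ is finitely presented, one constructs for every $F \in \CF(\CC)$ a canonical exact sequence whose outer terms lie in $\CS$ and whose middle term lies in the image of $\pi^{\ast}$; by the universal property of the Serre quotient this produces an equivalence $\CF(\CC)/\CS \simeq \CF(\CC/[\CB])$ together with a quotient functor $Q \colon \CF(\CC) \to \CF(\CC)/\CS$ with kernel precisely $\CS$.

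The second step is a transfinite induction showing $(\CF(\CC)/\CS)_{\alpha} = Q(\CF(\CC)_{\alpha})$ for every ordinal $\alpha$. The base case is the observation that an object of $\CF(\CC)/\CS$ has finite length if and only if it is the image of a finite-length object of $\CF(\CC)$, which holds because $\CS \subseteq \CF(\CC)_0$. The successor step uses the iterated-quotient identification $(\CA/\CS)/Q(\CA_{\alpha}) \simeq \CA/\CA_{\alpha}$, valid whenever $\CS \subseteq \CA_{\alpha}$; the limit step is handled by taking unions. Since $\CS \subseteq \CF(\CC)_0$, the inclusion $\CF(\CC)_{\alpha} \subseteq \CF(\CC)$ is strict if and only if $Q(\CF(\CC)_{\alpha}) \subseteq \CF(\CC)/\CS$ is strict, giving $\KGdim(\CF(\CC)/\CS) = \KGdim(\CF(\CC))$, which combined with the first step proves the proposition.

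The main obstacle is the first step: establishing the equivalence $\CF(\CC)/\CS \simeq \CF(\CC/[\CB])$ at the level of finitely presented functors rather than merely inside the larger category of all contravariant $K$-linear functors. Without the hypothesis that each $S_X$ is finitely presented, the localisation can create objects that are not finitely presented, and the equivalence breaks. Producing, for a given $F \in \CF(\CC)$, a finite presentation of its localisation requires approximating $F$ by representables along morphisms that become isomorphisms modulo $[\CB]$, and this is where the finiteness of the set of indecomposables of $\CB$ enters in an essential way. Once this construction is in place the Krull--Gabriel bookkeeping of the induction is routine.
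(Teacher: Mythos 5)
The paper itself gives no proof of this proposition: it is quoted from Geigle \cite{Ge2}*{Corollary 2.9} with a qed box, so there is no internal argument to compare yours with. Your overall strategy --- identify $\CF(\CC/[\CB])$ with the Serre quotient of $\CF(\CC)$ by the Serre subcategory $\CS$ generated by the finitely many finitely presented simples $S_X$, and then match the Krull--Gabriel filtrations of $\CF(\CC)$ and $\CF(\CC)/\CS$ --- is indeed the standard route behind Geigle's result. However, the step you dismiss as ``routine bookkeeping'' contains a genuine gap. You justify the base case of your induction, namely that an object of $\CF(\CC)/\CS$ has finite length if and only if it is the image of a finite-length object of $\CF(\CC)$, solely by the inclusion $\CS\subseteq\CF(\CC)_{0}$. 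That principle is false for a general Serre subcategory of the zeroth layer: taking $\CS=\CA_{0}$ (which certainly satisfies $\CS\subseteq\CA_{0}$) gives $\KGdim(\CA/\CA_{0})=\KGdim(\CA)-1$ whenever the latter is finite and positive, because by definition every object of $\CA_{1}$ becomes of finite length in $\CA/\CA_{0}$. So containment in $\CF(\CC)_{0}$ cannot by itself yield $(\CF(\CC)/\CS)_{\alpha}=Q(\CF(\CC)_{\alpha})$, and your induction fails already at $\alpha=0$; the finiteness of the family $\{S_{X_1},\dots,S_{X_n}\}$ must enter here, not only in your first step.

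The correct argument at this point uses finite-dimensionality of evaluations: for $F\in\CF(\CC)$ each $F(X_i)$ is a finite-dimensional $K$-vector space, evaluation at $X_i$ is exact, and $S_{X_i}(X_i)\neq 0$, so along any chain of subobjects of $F$ at most $\sum_i\dim_K F(X_i)$ subquotients can lie in $\CS$; combined with the finite length of $Q(F)$ this bounds the length of every chain of subobjects of $F$, so $F$ has finite length. With this replacement (and the analogous observation that preimages of the higher layers are the higher layers) your transfinite induction does go through. Separately, note that your first step --- the equivalence $\CF(\CC)/\CS\simeq\CF(\CC/[\CB])$ at the level of finitely presented functors --- is only asserted, and you rightly identify it as the main technical obstacle: to make the proposal a proof you must either carry out that construction (the contravariant finiteness of $\add(\CB)$, exactly as in Lemma 2.7 of the main text, is what makes $\Hom_{\CC/[\CB]}(\pi(-),\pi(N))$ finitely presented over $\CC$) or cite it, which in effect is what the paper does by invoking Geigle.
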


Observe that $X \in \CF (\CC)$, for an indecomposable object $X$ of $\CC$, if and only if there is a source map for $X$, i.e.\ a map $f \colon X \to
M$ such that $f$ is not a section and every $f' \colon X \to M'$ which is not a section factors through $f$.

Let $H$ be a hereditary algebra. One defines the cluster category $\CC_H$ as the quotient of the bounded derived category $\CD^b (\mod H)$ by the
action of the functor $\tau^{-1} \circ \Sigma$, where $\tau$ is the Auslander--Reiten translation and $\Sigma$ is the shift functor. It is known that
$\CC_H$ is a triangulated category, whose shift functor is induced by the shift functor $\Sigma$ in $\CD^b (\mod H)$~\cite[Theorem~1]{Ke1}, with
almost split triangles~\cite[Proposition~1.3]{BMRRT}. In particular, there is a source map for every indecomposable object of $\CC_H$, hence we may
apply Proposition~\ref{prop Geigle} with $\CC = \CC_H$.

An object $T$ of $\CC_H$ is called cluster-tilting if $\Hom_{\CC_H} (T, \Sigma T) = 0$ and if for each indecomposable object $X$ of $\CC_H$ the
equality $\Hom_{\CC_H} (X, \Sigma T) = 0$ implies that $X$ is a direct summand of $T$. It is known that every cluster-tilting objects has $n$
pairwise nonisomorphic indecomposable direct summands, where $n$ is the number of pairwise nonisomorphic simple
$H$-modules~\cite[Theorem~3.3]{BMRRT}. The cluster-tilted algebras of type $H$ are by definition the opposite algebras of the endomorphism algebras
of the cluster-tilting objects in $\CC_H$. The following fundamental result~\cite[Theorem~A]{BMR} will be of importance to us.

\begin{prop} \label{prop BMR}
Let $H$ be a hereditary algebra, $T$ a cluster-tilting object, and $C = \End_{\CC_H} (T)^{\op}$. Then $\Hom_{\CC_H} (T, -)$ induces an equivalence
\[
\CC_H / [\add \Sigma T] \simeq \mod C. \eqno \qed
\]
\end{prop}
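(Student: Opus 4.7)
The plan is to show that $F := \Hom_{\CC_H}(T, -)$ is dense and full as a functor $\CC_H \to \mod C$, and that its kernel ideal $\Ker(F)$ equals $[\add \Sigma T]$. These three facts together imply that the induced functor on $\CC_H / [\add \Sigma T]$ is an equivalence onto $\mod C$. Throughout, I use the rigidity $\Hom_{\CC_H}(T, \Sigma T) = 0$ of $T$ and, crucially, the following consequence of the cluster-tilting definition combined with the $2$-Calabi--Yau duality $\Hom_{\CC_H}(T, X) \cong D \Hom_{\CC_H}(\Sigma^{-1} X, \Sigma T)$ on $\CC_H$: an indecomposable object $X$ satisfies $\Hom_{\CC_H}(T, X) = 0$ if and only if $X \in \add \Sigma T$.

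For \emph{density}, fix $M \in \mod C$ and choose a projective presentation $F(T_1) \to F(T_0) \to M \to 0$ with $T_0, T_1 \in \add T$; this is possible because the projective $C$-modules are precisely the $F(T')$ for $T' \in \add T$. Yoneda applied to the fully faithful restriction of $F$ to $\add T$ lifts $F(T_1) \to F(T_0)$ to some $\beta \colon T_1 \to T_0$ in $\CC_H$. Completing $\beta$ to a triangle $T_1 \to T_0 \to X \to \Sigma T_1$ and applying $F$, the rigidity gives $F(\Sigma T_1) = 0$, whence $F(X) \cong M$.

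For the \emph{kernel}, every morphism factoring through $\add \Sigma T$ lies in $\Ker(F)$. Conversely, given $f \colon X \to Y$ with $F(f) = 0$, obtain a right $\add T$-approximation $u \colon T_0 \to X$ by lifting a surjection $C^n \twoheadrightarrow F(X)$, and complete $u$ to a triangle $T_1 \to T_0 \xrightarrow{u} X \xrightarrow{w} \Sigma T_1$. Applying $F$, the surjectivity of $F(u)$ combined with $F(\Sigma T_0) = 0$ forces $F(\Sigma T_1) = 0$, so $T_1 \in \add T$ by the cluster-tilting criterion. Since $T_0 \in \add T$ and $F(f) = 0$, we have $f \circ u = 0$, and the long exact $\Hom(-, Y)$-sequence attached to the triangle then produces $g \colon \Sigma T_1 \to Y$ with $f = g \circ w$, proving $f \in [\add \Sigma T]$.

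For \emph{fullness}, given a $C$-linear $\phi \colon F(X) \to F(Y)$, form the triangles above for both $X$ and $Y$, providing projective presentations of $F(X)$ and $F(Y)$. Lift $\phi$ through these presentations to $C$-linear morphisms, then via Yoneda on $\add T$ to morphisms $\alpha_i \colon T_i \to T_i'$ whose square commutes in $\CC_H$; axiom (TR3) extends this pair to a morphism of triangles and yields some $\alpha \colon X \to Y$. The equality $F(\alpha) = \phi$ then follows from the surjectivity of $F(u)$ together with the commuting square. I expect this last step to be the main obstacle: (TR3) only produces \emph{some} extension $\alpha$ (different choices differing by morphisms that factor through $\Sigma T_1 \in \add \Sigma T$), and the identification $F(\alpha) = \phi$ must be extracted from the surjectivity of $F(u)$ rather than from any canonical choice of $\alpha$.
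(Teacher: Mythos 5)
The paper gives no proof of this proposition: it is imported verbatim from \cite{BMR}*{Theorem A} with the \qedsymbol\ placed in the statement. Your argument is a correct, essentially self-contained proof, and it follows what has become the standard route for cluster-tilting objects in $2$-Calabi--Yau categories (in substance the Keller--Reiten argument) rather than the original proof of Buan--Marsh--Reiten, which works inside $\CD^b(\mod H)$ and reduces to tilting modules over hereditary algebras derived equivalent to $H$. All three steps check out: density via lifting a projective presentation through the equivalence $\add T \simeq \proj C$ and taking a cone, killed on the right by $\Hom_{\CC_H}(T,\Sigma T_1)=0$; the kernel computation via an $\add T$-approximation triangle $T_1 \to T_0 \xrightarrow{u} X \xrightarrow{w} \Sigma T_1$, where surjectivity of $F(u)$ together with $\Hom_{\CC_H}(T,\Sigma T_0)=0$ forces $\Hom_{\CC_H}(T,\Sigma T_1)=0$ and hence $T_1\in\add T$; and fullness, where you correctly isolate the real issue, namely that (TR3) produces only \emph{some} $\alpha$ and that $F(\alpha)=\phi$ must be read off from the surjectivity of $F(u)$. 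Two inputs are used without justification and deserve explicit citation: the Serre duality $\Hom_{\CC_H}(A,B)\cong D\Hom_{\CC_H}(B,\Sigma^2 A)$, which you need to convert the defining condition ($\Hom_{\CC_H}(X,\Sigma T)=0$ implies $X\in\add T$) into the form you actually use ($\Hom_{\CC_H}(T,Z)=0$ implies $Z\in\add\Sigma T$) and which descends from $\CD^b(\mod H)$ to the orbit category \cite{BMRRT}; and the Krull--Schmidt property of $\CC_H$, which the appendix assumes as a blanket convention. What your approach buys over a bare citation is generality and transparency: nothing in your argument uses that $\CC_H$ arises from a hereditary algebra, only that it is a Hom-finite Krull--Schmidt $2$-Calabi--Yau triangulated category with a cluster-tilting object.
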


As an immediate consequence of Propositions~\ref{prop Geigle} and~\ref{prop BMR} we obtain the following.

\begin{cor} \label{coro}
If $H$ is hereditary algebra and $C$ a cluster-tilted algebra of type $H$, then
\[
\KG (C) = \KGdim (\CF (\CC_H)).
\]
\end{cor}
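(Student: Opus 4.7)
The plan is to apply Proposition~\ref{prop Geigle} to the cluster category $\CC = \CC_H$ with $\CB = \add \Sigma T$, where $T$ is any cluster-tilting object satisfying $\End_{\CC_H}(T)^{\op} \cong C$, and then to transport the quotient $\CC_H / [\add \Sigma T]$ to $\mod C$ by invoking Proposition~\ref{prop BMR}.

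First I would check the three hypotheses of Proposition~\ref{prop Geigle}. The coherence of $\CC_H$ and the existence of a source map for every indecomposable $X$ of $\CC_H$ both follow from the fact (recalled in the excerpt) that $\CC_H$ is a triangulated category admitting almost split triangles; by the remark immediately following Proposition~\ref{prop Geigle}, this already gives $S_X \in \CF(\CC_H)$ for every indecomposable $X$ of $\CC_H$, and in particular for the indecomposable summands of $\Sigma T$. The subcategory $\CB = \add \Sigma T$ contains only finitely many indecomposables up to isomorphism, namely the $n$ pairwise nonisomorphic summands of $\Sigma T$ guaranteed by the cluster-tilting property.

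With the hypotheses verified, Proposition~\ref{prop Geigle} yields
$$\KGdim(\CF(\CC_H)) = \KGdim(\CF(\CC_H / [\add \Sigma T])).$$
Proposition~\ref{prop BMR} then supplies an equivalence $\CC_H / [\add \Sigma T] \simeq \mod C$ induced by $\Hom_{\CC_H}(T, -)$. Since such an equivalence of additive categories transports the associated categories of finitely presented contravariant functors into one another, and the Krull--Gabriel dimension depends only on the abelian category $\CF(-)$ up to equivalence, we obtain
$$\KGdim(\CF(\CC_H / [\add \Sigma T])) = \KGdim(\CF(\mod C)) = \KG(C).$$
Chaining the two equalities proves the corollary.

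Since almost all the work is done by the two cited propositions, I do not foresee a genuine obstacle; the one point that would deserve elaboration in a careful write-up is to exhibit explicitly the finite presentation of each $S_X$ coming from the almost split triangle ending at $X$, thereby confirming the criterion $S_X \in \CF(\CC_H)$ used above.
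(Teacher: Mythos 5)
Your proposal is correct and follows essentially the same route as the paper: invoke Proposition~\ref{prop BMR} to identify $\mod C$ with $\CC_H/[\add \Sigma T]$ and Proposition~\ref{prop Geigle} with $\CC=\CC_H$, $\CB=\add\Sigma T$, the hypotheses being verified exactly as in the discussion preceding the corollary (triangulated structure, almost split triangles giving source maps, and the $n$ indecomposable summands of $\Sigma T$). No substantive difference from the paper's argument.
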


\begin{proof}
Let $C = \End_{\CC_H} (T)^{\op}$, for a cluster-tilting object $T$ in $\CC_H$. We know from Propsition~\ref{prop BMR} that
\[\KG (C) = \KGdim (\CF (\CC_H / [\add \Sigma T])),
\]
thus it is sufficient to apply Proposition~\ref{prop Geigle} with $\CC = \CC_H$ and $\CB = \add \Sigma T$.
\end{proof}

Now we are ready to prove Theorem~\ref{theo mainbis}.

\begin{proof}[Proof of Theorem~\ref{theo mainbis}]
Let $C$ be a cluster-titled algebra of type $H$. Then
\begin{equation} \label{eq one}
\KG (C) = \KGdim (\CF (\CC_H)),
\end{equation}
by Corollary~\ref{coro}. On the other hand, it is well known that $H$ itself is a cluster-tilted algebra of type $H$ ($H$ viewed as an object of
$\CC_H$ is a cluster-tilting object with $\End_{\CC_H} (H)^{\op} \simeq H$), hence using again Corollary~\ref{coro} we obtain
\begin{equation} \label{eq two}
\KG (H) = \KGdim (\CF (\CC_H)).
\end{equation}
Now the claim follows from~\eqref{eq one} and~\eqref{eq two}.
\end{proof}

\paragraph{Acknowledgments}
This research has been supported by the National Science Centre grant no.~2020/37/B/ST1/00127.

\bibsection

\begin{biblist}

\bib{As}{article}{
   author={Assem, I.},
   title={A course on cluster tilted algebras},
   conference={
      title={Homological methods, representation theory, and cluster
      algebras},
   },
   book={
      series={CRM Short Courses},
      publisher={Springer, Cham},
   },
   date={2018},
   pages={127--176},
}

\bib{ABS2}{article}{
   author={Assem, I.},
   author={Br\"{u}stle, T.},
   author={Schiffler, R.},
   title={Cluster-tilted algebras as trivial extensions},
   journal={Bull. Lond. Math. Soc.},
   volume={40},
   date={2008},
   number={1},
   pages={151--162},
}

\bib{ABS}{article}{
   author={Assem, I.},
   author={Br\"{u}stle, T.},
   author={Schiffler, R.},
   title={On the Galois coverings of a cluster-tilted algebra},
   journal={J. Pure Appl. Algebra},
   volume={213},
   date={2009},
   number={7},
   pages={1450--1463},
}

\bib{AHR}{article}{
   author={Assem, I.},
   author={Happel, D.},
   author={Rold\'{a}n, O.},
   title={Representation-finite trivial extension algebras},
   journal={J. Pure Appl. Algebra},
   volume={33},
   date={1984},
   number={3},
   pages={235--242},
}	

\bib{ANSk}{article}{
   author={Assem, I.},
   author={Nehring, J.},
   author={Skowro\'{n}ski, A.},
   title={Domestic trivial extensions of simply connected algebras},
   journal={Tsukuba J. Math.},
   volume={13},
   date={1989},
   number={1},
   pages={31--72},
}

\bib{AsSiSk}{book}{
   author={Assem, I.},
   author={Simson, D.},
   author={Skowro\'{n}ski, A.},
   title={Elements of the representation theory of associative algebras.
   Vol. 1},
   series={London Mathematical Society Student Texts},
   volume={65},
   note={Techniques of representation theory},
   publisher={Cambridge University Press, Cambridge},
   date={2006},
}

\bib{AsSk4}{article}{
   author={Assem, I.},
   author={Skowro\'{n}ski, A.},
   title={On tame repetitive algebras},
   journal={Fund. Math.},
   volume={142},
   date={1993},
   number={1},
   pages={59--84},
}

\bib{Au}{collection.article}{
   author={Auslander, M.},
   title={A functorial approach to representation theory},
   book={
      title={Representations of Algebras},
      series={Lecture Notes in Math.},
      volume={944},
      publisher={Springer, Berlin-New York},
   },
   date={1982},
   pages={105--179},
}

\bib{AuRe}{article}{
   author={Auslander, M.},
   author={Reiten, I.},
   title={Applications of contravariantly finite subcategories},
   journal={Adv. Math.},
   volume={86},
   date={1991},
   number={1},
   pages={111--152},
}

\bib{AuSm}{article}{
   author={Auslander, M.},
   author={Smal\o , S. O.},
   title={Preprojective modules over Artin algebras},
   journal={J. Algebra},
   volume={66},
   date={1980},
   number={1},
   pages={61--122},
}

\bib{Baer}{collection.article}{
   author={Baer, D.},
   title={Homological properties of wild hereditary Artin algebras},
   book={
      title={Representation theory, I},
      series={Lecture Notes in Math.},
      volume={1177},
      publisher={Springer, Berlin},
   },
   date={1986},
   pages={1--12},
}

\bib{BoGa}{article}{
   author={Bongartz, K.},
   author={Gabriel, P.},
   title={Covering spaces in representation-theory},
   journal={Invent. Math.},
   volume={65},
   date={1981/82},
   number={3},
   pages={331--378},
}

\bib{BMRRT}{article}{
   author={Buan, A. B.},
   author={Marsh, R.},
   author={Reineke, M.},
   author={Reiten, I.},
   author={Todorov, G.},
   title={Tilting theory and cluster combinatorics},
   journal={Adv. Math.},
   volume={204},
   date={2006},
   number={2},
   pages={572--618},
}

\bib{BMR}{article}{
   author={Buan, A. B.},
   author={Marsh, R. J.},
   author={Reiten, I.},
   title={Cluster-tilted algebras},
   journal={Trans. Amer. Math. Soc.},
   volume={359},
   date={2007},
   number={1},
   pages={323--332},
}

\bib{DoSk}{article}{
   author={Dowbor, P.},
   author={Skowro\'{n}ski, A.},
   title={Galois coverings of representation-infinite algebras},
   journal={Comment. Math. Helv.},
   volume={62},
   date={1987},
   number={2},
   pages={311--337},
}

\bib{ErKeSk}{article}{
   author={Erdmann, K.},
   author={Kerner, O.},
   author={Skowro\'{n}ski, A.},
   title={Self-injective algebras of wild tilted type},
   journal={J. Pure Appl. Algebra},
   volume={149},
   date={2000},
   number={2},
   pages={127--176},
}

\bib{FoZe1}{article}{
   author={Fomin, S.},
   author={Zelevinsky, A.},
   title={Cluster algebras. I. Foundations},
   journal={J. Amer. Math. Soc.},
   volume={15},
   date={2002},
   number={2},
   pages={497--529},
}

\bib{Ga}{article}{
   author={Gabriel, P.},
   title={The universal cover of a representation-finite algebra},
   conference={
      title={Representations of algebras},
   },
   book={
      series={Lecture Notes in Math.},
      volume={903},
      publisher={Springer, Berlin-New York},
   },
   date={1981},
   pages={68--105},
}	

\bib{Geigle1985}{article}{
   author={Geigle, W.},
   title={The Krull-Gabriel dimension of the representation theory of a tame hereditary Artin algebra and applications to the structure of exact sequences},
   journal={Manuscripta Math.},
   volume={54},
   date={1985},
   number={1-2},
   pages={83--106},
}

\bib{Ge2}{article}{
   author={Geigle, W.},
   title={Krull dimension and Artin algebras},
   book={
      title={Representation theory, I},
      series={Lecture Notes in Math.},
      volume={1177},
      publisher={Springer, Berlin},
   },
   date={1986},
   pages={135--155},
}

\bib{HR}{article}{
   author={Happel, D.},
   author={Ringel, C. M.},
   title={Tilted algebras},
   journal={Trans. Amer. Math. Soc.},
   volume={274},
   date={1982},
   number={2},
   pages={399--443},
}

\bib{HW}{article}{
   author={Hughes, D.},
   author={Waschb\"{u}sch, J.},
   title={Trivial extensions of tilted algebras},
   journal={Proc. London Math. Soc. (3)},
   volume={46},
   date={1983},
   number={2},
   pages={347--364},
}

\bib{JSk}{article}{
   author={Jaworska-Pastuszak, A.},
   author={Skowro\'{n}ski, A.},
   title={Selfinjective algebras without short cycles of indecomposable
   modules},
   journal={J. Pure Appl. Algebra},
   volume={222},
   date={2018},
   number={11},
   pages={3432--3447},
}

\bib{JeLe}{book}{
   author={Jensen, Ch. U.},
   author={Lenzing, H.},
   title={Model-theoretic algebra with particular emphasis on fields, rings,
   modules},
   series={Algebra, Logic and Applications},
   volume={2},
   publisher={Gordon and Breach Science Publishers, New York},
   date={1989},
}

\bib{KaPa3}{article}{
   author={Kasjan, S.},
   author={Pastuszak, G.},
   title={Super-decomposable pure-injective modules over algebras with
   strongly simply connected Galois coverings},
   journal={J. Pure Appl. Algebra},
   volume={220},
   date={2016},
   number={8},
   pages={2985--2999},
}

\bib{Ke1}{article}{
   author={Keller, B.},
   title={On triangulated orbit categories},
   journal={Doc. Math.},
   volume={10},
   date={2005},
   pages={551--581},
}

\bib{Ke}{article}{
   author={Keller, B.},
   title={Cluster algebras and cluster categories},
   journal={Bull. Iranian Math. Soc.},
   volume={37},
   date={2011},
   number={2},
   pages={187--234},
}

\bib{Kr2}{article}{
   author={Krause, H.},
   title={Generic modules over Artin algebras},
   journal={Proc. London Math. Soc. (3)},
   volume={76},
   date={1998},
   number={2},
   pages={276--306},
}

\bib{Kr}{article}{
   author={Krause, H.},
   title={The spectrum of a module category},
   journal={Mem. Amer. Math. Soc.},
   volume={149},
   date={2001},
   number={707},
}

\bib{P4}{article}{
   author={Pastuszak, G.},
   title={On Krull-Gabriel dimension and Galois coverings},
   journal={Adv. Math.},
   volume={349},
   date={2019},
   pages={959--991},
}

\bib{P5}{article}{
   author={Pastuszak, G.},
   title={On wild algebras and and super-decomposable pure-injective modules},
   journal={Algebras and Representation Theory},
   date={2022},
   doi={10.1007/s10468022101176},
}

\bib{P6}{article}{
   author={Pastuszak, G.},
   title={Corrigendum to “On Krull-Gabriel dimension and Galois coverings” [Adv. Math. 349 (2019) 959–991]},
   journal={Adv. Math.},
   volume={438},
   pages = {Paper No. 109464},
   date={2024},
}

\bib{Po}{book}{
   author={Popescu, N.},
   title={Abelian categories with applications to rings and modules},
   series={London Mathematical Society Monographs, No. 3},
   publisher={Academic Press, London-New York},
   date={1973},
}

\bib{Pr2}{book}{
   author={Prest, M.},
   title={Purity, spectra and localisation},
   series={Encyclopedia of Mathematics and its Applications},
   volume={121},
   publisher={Cambridge University Press, Cambridge},
   date={2009},
}

\bib{Sch3}{article}{
   author={Schr\"{o}er, J.},
   title={The Krull-Gabriel dimension of an algebra---open problems and
   conjectures},
   conference={
      title={Infinite length modules},
      address={Bielefeld},
      date={1998},
   },
   book={
      series={Trends Math.},
      publisher={Birkh\"{a}user, Basel},
   },
   date={2000},
   pages={419--424},
}

\bib{SiSk3}{book}{
   author={Simson, D.},
   author={Skowro\'{n}ski, A.},
   title={Elements of the representation theory of associative algebras.
   Vol. 3},
   series={London Mathematical Society Student Texts},
   volume={72},
   note={Representation-infinite tilted algebras},
   publisher={Cambridge University Press, Cambridge},
   date={2007},
}

\bib{SkBC}{article}{
   author={Skowro\'{n}ski, A.},
   title={Algebras of polynomial growth},
   conference={
      title={Topics in algebra, Part 1},
      address={Warsaw},
      date={1988},
   },
   book={
      series={Banach Center Publ.},
      volume={26},
   },
   date={1990},
   pages={535--568},
}

\bib{Sk4}{article}{
   author={Skowro\'{n}ski, A.},
   title={Selfinjective algebras: finite and tame type},
   conference={
      title={Trends in representation theory of algebras and related topics},
   },
   book={
      series={Contemp. Math.},
      volume={406},
      publisher={Amer. Math. Soc., Providence, RI},
   },
   date={2006},
   pages={169--238},
}

\bib{Zi}{article}{
   author={Ziegler, M.},
   title={Model theory of modules},
   journal={Ann. Pure Appl. Logic},
   volume={26},
   date={1984},
   number={2},
   pages={149--213},
}
	
\end{biblist}

\end{document}